\newtheorem{theorem}{Theorem}
\newtheorem{lemma}[theorem]{Lemma}
\newcommand{\<}{\langle{}}
\renewcommand{\>}{\rangle}
\newcommand{\R}{\ensuremath{\mathbb{R}}}
\newcommand{\brac}[2]{\bigl[#1\,#2\bigr]}
\newcommand{\sbrac}[2]{[#1\,#2]}
\newcommand{\ip}[2]{\llangle#1\hspace*{.5mm},#2\rrangle}
\newcommand{\vdual}[2]{(#1\hspace*{.5mm},#2)}
\newcommand{\tq}{{\hat q}}
\newcommand{\tU}{{\hat U}}
\newcommand{\HH}{\mathbb{H}^2}
\newcommand{\du}{{\delta\!u}}
\newcommand{\dM}{{\delta\!M}}
\newcommand{\dtq}{{\delta\!\tq}}
\newcommand{\dv}{{\delta\!v}}
\newcommand{\dz}{{\delta\!z}}
\newcommand{\dW}{{\delta\!W}}
\newcommand{\cT}{\mathcal{T}}
\newcommand{\ttt}{{\rm T}_t}
\title{A locking-free DPG scheme for Timoshenko beams
\thanks{This research has been supported by CONICYT-Chile through
        Fondecyt projects 1190009, 11170050, 3190359}} 
\author{
Thomas F\"{u}hrer$^{\dagger}$, Carlos Garc\'{i}a Vera$^{\dagger}$,
Norbert Heuer\thanks{Facultad de Matem\'{a}ticas, Pontificia 
Universidad Cat\'olica de Chile, Avenida Vicu\~na Mackenna 4860, Santiago, 
Chile, email: {\tt \{tofuhrer,cgarciv,nheuer\}@mat.uc.cl}}
}
\date{\today}
\begin{document}
\maketitle

\begin{abstract}
We develop a discontinuous Petrov--Galerkin scheme with optimal test functions (DPG method)
for the Timoshenko beam bending model with various boundary conditions,
combining clamped, supported, and free ends. Our scheme approximates
the transverse deflection and bending moment. It converges quasi-optimally
in $L_2$ and is locking free. In particular, it behaves well (converges quasi-optimally)
in the limit case of the Euler--Bernoulli model.
Several numerical results illustrate the performance of our method.

\bigskip
\noindent
{\em Key words}: beam bending, Timoshenko model, Euler--Bernoulli model, discontinuous 
Petrov--Galerkin method, optimal test functions.

\noindent
{\em AMS Subject Classifications}: 74S05, 74K10, 65L11, 65L60 
\end{abstract}

\section{Introduction}

Thin structures like beams, plates, and shells form an important area of research in solid mechanics.
The Reissner--Mindlin model is among the most widely used for the analysis of plate bending.
The corresponding one-dimensional case is the Timoshenko model for beam bending.
Numerical schemes for these models are tricky to design due to the presence of the thickness
parameter, $t$, which induces a singular perturbation when $t\to 0$ in the case of plates.
Not carefully designed approximation schemes suffer from locking.
Mathematically, the limit cases (setting $t=0$ in the scaled models) correspond to the
Kirchhoff--Love and Euler--Bernoulli models in the case of plates and beams, respectively.

There is extensive literature on the numerical analysis of these models generally, though with fewer
results from the mathematics community on the Kirchhoff--Love model, which can suffer from a lack
of regularity. We do not discuss the many contributions that exist but mention some
mathematically focused paper, on the discontinuous Galerkin scheme for Euler--Bernoulli beams from
Baccouch \cite{Baccouch_14_LDG}, and on locking-free $hp$ finite element approaches
for Timoshenko beams from Li and Celiker \emph{et al.} \cite{Li_90_DTB,CelikerCS_06_LFO}.
More recently, Lepe \emph{et al.} presented a locking-free mixed finite element scheme for
the Timoshenko model \cite{LepeMR_14_LFF}.

Here we continue our study of discontinuous Petrov--Galerkin schemes with optimal test
functions (DPG method) for singularly perturbed problems. The underlying idea consists in
using product (``broken'') test spaces and optimal test functions to automatically
satisfy discrete inf--sup properties of Galerkin schemes for any well-posed variational
formulation, see the early works of Demkowicz and Gopalakrishnan, e.g., \cite{DemkowiczG_11_ADM}.
In order to obtain robust (or locking-free) approximations it is critical
to select appropriate test norms \cite{DemkowiczH_13_RDM,ChanHBTD_14_RDM},
possibly in combination with specifically designed variational formulations
\cite{HeuerK_17_RDM,FuehrerHS_UFR}, or adaptively improved test functions \cite{DemkowiczFHT_DAP}.

In this paper we develop a locking-free DPG method for Timoshenko beams that also works in
the limit case of thickness zero (in a scaled version), the Euler--Bernoulli model.
We use our knowledge of variational formulations for fourth-order problems that we have
obtained from our work on the Kirchhoff--Love plate bending model
\cite{FuehrerHN_19_UFK,FuehrerH_19_FDD,FuehrerHH_TOB}, and on the Reissner--Mindlin plate
model \cite{FuehrerHS_UFR}. Specifically, we follow \cite{FuehrerHS_UFR} where we developed
an ultraweak formulation based on the deflection and bending moment, and included the gradient
of the deflection as an independent unknown. The Reissner--Mindlin model has some
critical regularity issues, with weaker deflection and stronger bending moment compared to
the Kirchhoff--Love situation. This makes the analysis interesting.
The techniques presented in this paper for the Timoshenko beam model are based on
those from \cite{FuehrerHS_UFR}. But, instead of closely following the same paths,
we simplify procedures and shorten proofs since in the one-dimensional situation
regularity properties are much simpler. For instance, considering $L_2$ regular distributed
forces, both the deflection and bending moment variables are $H^2$-regular.
Therefore, we can avoid using the additional gradient variable
without complicating the theoretical analysis. Furthermore,
the sophisticated trace operators from \cite{FuehrerHS_UFR} dramatically simplify.
Due to the $H^2$-regularities we are able to use some techniques from
\cite{FuehrerHH_TOB} where we studied the bi-Laplacian in higher space dimensions.

Finally we note that Niemi \emph{et al.} \cite{NiemiBD_11_DPG} have used DPG techniques for beams before.
Though they only consider a cantilever with tip load, without distributed load or different
boundary conditions, and assume the beam thickness to be fixed.

The remainder of this paper is organized as follows.
In Section~\ref{sec_model} we present the model problem, develop an ultraweak variational
formulation, and state its well-posedness (Theorem~\ref{thm_VF}).
The DPG scheme is briefly presented in Section~\ref{sec_DPG}, and Theorem~\ref{thm_DPG}
states its robust quasi-optimal convergence. Section~\ref{sec_proof} gives a proof
of the well-posedness of our variational formulation, split into several lemmas.
Various numerical experiments for different thickness parameters and boundary conditions
are presented in Section~\ref{sec_num}. They confirm that our scheme is locking free.

Throughout this paper, $a \lesssim b$ means that $a\leq cb$ with a generic constant $c>0$ that 
is independent of the thickness parameter $t$ and the underlying mesh. Similarly, we use the 
notation $a \gtrsim b$ and $a \simeq b$. 

\section{Model problem and variational formulation} \label{sec_model}

We consider the scaled dimensionless stationary Timoshenko model for beam bending, formulated as
in \cite{NiemiBD_11_DPG} (though with different sign for the bending moment):

\[
   t^2 Q = \theta (u'-\psi),\quad -M = \psi',\quad -Q' = p,\quad M'-kQ = 0
   \qquad\text{in}\quad\ I:=(0,1).
\]
Here, $u$, $\psi$, $Q$, $M$ are, respectively, the transverse deflection, the rotation of the beam's 
cross section, the shear force, and the bending moment. The beam has length $1$ and thickness $t$.
We assume that $t\in [0,1]$.
Furthermore, $p$ is the distributed force and $k$, $\theta$ are constants.
As in \cite{FuehrerHN_19_UFK,FuehrerHS_UFR} for plate problems, we develop
a scheme that approximates the deflection and bending moment variables.
To this end we replace $Q$ by using the relation $M'-kQ = 0$ and eliminate $\psi$, obtaining
\[
   -M'' = f := kp, \qquad t^2 M'' = k\theta(u''+M).
\]
For simplicity we set $k\theta=1$. This parameter is not critical. The strong form of our model
problem then is
\begin{equation} \label{prob}
   -M'' = f,\quad M - t^2 M'' + u'' = 0 \qquad\text{in}\quad I.
\end{equation}
We note that as expected, setting $t = 0$, the Timoshenko model reduces to the Euler--Bernoulli model.
In the following we assume that $f\in L_2(I)$. Then, $u,M\in H^2(I)$.

It remains to specify boundary conditions. We consider all the physically relevant combinations
of clamped end (deflection and rotation are given), supported end (deflection and bending moment
are given), and free end (bending moment and shear force are given), for simplicity of presentation
with homogeneous data only. We note that it is straightforward to consider
non-homogeneous boundary data since all the relevant traces are present in our formulation.
Using the relation $\psi=u'-t^2 M'$ for the rotation, the boundary conditions are

\begin{equation}\label{BC}
\begin{array}{llllllllll}
   \text{clamped-clamped:}
   &u(0)=0,\quad &u'(0)=t^2M'(0),\, &u(1)=0,\, &u'(1)=t^2M'(1), \\  
   \text{clamped-supported:}
   &u(0)=0,\, &u'(0)=t^2M'(0),\, &u(1)=0,\, &M(1)=0,\\ 
   \text{clamped-free:}
   &u(0)=0,\, &u'(0)=t^2M'(0),\, &M(1)=0,\, &M'(1)=0,\\ 
   \text{supported-supported:}
   &u(0)=0,\, &M(0)   =0,\, &u(1)=0,\, &M(1)=0.
\end{array}
\end{equation}
The corresponding solution spaces are
\begin{align*}
   &\HH_{cc}(t) := \{(u,M)\in H^2(I)\times H^2(I);\; u(0)=u'(0)-t^2 M'(0)=u(1)=u'(1)-t^2 M'(1)=0\},\\
   &\HH_{cs}(t) := \{(u,M)\in H^2(I)\times H^2(I);\; u(0)=u'(0)-t^2 M'(0)=u(1)=M(1)=0\},\\
   &\HH_{cf}(t) := \{(u,M)\in H^2(I)\times H^2(I);\; u(0)=u'(0)-t^2 M'(0)=M(1)=M'(1)=0\},\\
   &\HH_{ss}(t) := \{(u,M)\in H^2(I)\times H^2(I);\; u(0)=M(0)=u(1)=M(1)=0\}.
\end{align*}
Of course, $\HH_{ss}(t)$ is independent of $t$.

Now we derive an ultraweak formulation of our Timoshenko beam problem.
For a positive integer $n$ and nodes $0=x_0 < x_1 < x_2 < \ldots <x_n=1$, let us consider the partition 
$\cT=\{I_j=(x_{j-1},x_j);\; j=1,\ldots,n\}$ of $I$. Below, we denote $h_j:=x_j-x_{j-1}$
($j=1,\ldots,n$). Testing the equations from \eqref{prob} respectively with
\[
   z,W\in H^2(\cT):=\{v\in L_2(I);\; v_j:=v|_{I_j}\in H^2(I_j),\ j=1,\ldots,n\},
\]
integration by parts gives
\begin{align}\label{VFa}
  &\vdual{u}{W''}_\cT + \vdual{M}{W+z''-t^2W''}_\cT
  \nonumber\\
  &+
  \sum_{j=1}^n \Bigl(-\brac{u}{W'}_j + \brac{M'}{z}_j
                     - \brac{M}{(z'-t^2W')}_j + \brac{(u'-t^2 M')}{W}_j\Bigr)
  =  -\vdual{f}{z}.
\end{align}
Here, $\vdual{\cdot}{\cdot}$ denotes the $L_2(I)$-inner product with norm $\|\cdot\|$, and
$\vdual{\cdot}{\cdot}_\cT$ indicates the $L_2$-inner product that is taken piecewise on $\cT$.
The $L_2(T)$-norm ($T\in\cT$) will be denoted by $\|\cdot\|_T$.
Furthermore, $\sbrac{\,\cdot}{}_j$ are the boundary terms from the integration-by-parts formula on
$I_j$. That is, e.g., $\sbrac{u}{W'}_j=u(x_j)W'(x_j)-u(x_{j-1})W'(x_{j-1})$ where the point evaluations
are taken from $u$ and $W$ restricted to $I_j$ ($j=1,\ldots,n$).

Introducing the following maps for the point evaluations,
\begin{align*}
   H^2(I_j)\ni z \mapsto &\gamma_j(z):= (z(x_{j-1}),z'(x_{j-1}),z(x_j),z'(x_j)),\quad j=1,\ldots,n,\\
   H^2(\cT)\ni z \mapsto &\gamma_h(z):= \Bigl(\gamma_j(z_j))\Bigr)_{j=1}^n
\end{align*}
(note that $z_j=z|_{I_j}$), the point evaluations from \eqref{VFa} are abbreviated as
\[
   \<\gamma_h(u,M),(z,W)\>_t :=
  \sum_{j=1}^n \Bigl(-\brac{u}{W'}_j + \brac{M'}{z}_j
                     - \brac{M}{(z'-t^2W')}_j + \brac{(u'-t^2 M')}{W}_j\Bigr).
\]
Here we abuse the notation and write $\gamma_h(u,M)=(\gamma_h(u),\gamma_h(M))$.
Of course, for $u\in H^2(I)$, $\gamma_h(u)$ gives rise to $2n+2$ independent real numbers
($\gamma_h$ induces a continuous surjective mapping from $H^2(I)$ to $\R^{2n+2}$),
and we note that
\[
   \<\gamma_h(u,M),(z,W)\>_t = - \<\gamma_h(z,W),(u,M)\>_t \quad\forall u,M,z,W\in H^2(\cT).
\]
For all of the boundary conditions from \eqref{BC}, $\gamma_h(u,M)$ allows for $4n$ independent scalar
unknowns. Specifically, depending on the type of boundary condition, we denote
\[
   \tU_a(t) := \gamma_h(\HH_a(t)),\quad a\in\{cc, cs, cf, ss\}.
\]
As noted before, the dimension of any of these spaces is $4n$, and $\tU_{ss}(t)$
is independent of $t$. We also need the image
\[
   \tU:=\gamma_h(H^2(I)\times H^2(I)) = (\gamma_h(H^2(I)),\gamma_h(H^2(I)) = \R^{4n+4}.
\]
A duality between any of these spaces with $H^2(\cT)\times H^2(\cT)$ is also denoted by
$\<\cdot,\cdot\>_t$, defined as
\[
   \<\tq,(z,W)\>_t := \<\gamma_h(u,M),(z,W)\>_t\quad\text{for}\quad
   u,M\in H^2(I)\ \text{with}\ \gamma_h(u,M)=\tq.
\]
Now, to present the ultraweak variational formulation of \eqref{prob} with one of the
boundary conditions \eqref{BC}, we denote the solution space as
\[
   U_a(t) := L_2(I)\times L_2(I)\times \tU_{a}(t)
   \quad\text{with}\quad a\in\{cc, cs, cf, ss\}\quad\text{as needed},
\]
and the test space as
\[
   V := H^2(\cT)\times H^2(\cT).
\]
Defining the norms (squared)
\begin{align} \label{discrete}
   \|\gamma_j(u)\|_j^2
   &:=
   \frac{h_j}{420}
   \Bigl[     156 \bigl(u(x_{j-1})^2 + u(x_j)^2\bigr) + 4 h_j^2 \bigl(u'(x_{j-1})^2 + u'(x_j)^2\bigr)
   \nonumber\\
   & \qquad + 108 u(x_{j-1}) u(x_j) + 44 h_j \bigl(u(x_{j-1})u'(x_{j-1}) - u(x_j) u'(x_j)\bigr)
   \nonumber\\
   & \qquad - 6 h_j^2 u'(x_{j-1}) u'(x_j) + 26 h_j \bigl(u(x_j)u'(x_{j-1}) - u(x_{j-1})u'(x_j)\bigr)
   \Bigr]
   \nonumber\\
   &+
   \frac{2}{h_j^3}\Bigl[6 \bigl(u(x_{j-1})^2 -2 u(x_{j-1}) u(x_j) + u(x_j)^2\bigr)
                         + 2 h_j^2 \bigl(u'(x_{j-1})^2 + u'(x_{j-1}) u'(x_j) + u'(x_j)^2\bigr)
   \nonumber\\
   &\qquad + 6 h_j \bigl(u(x_{j-1})u'(x_{j-1}) - u(x_j) u'(x_j)
                         + u(x_{j-1})u'(x_j) - u(x_j)u'(x_{j-1})\bigr)
                  \Bigr],
\end{align}
\begin{align*}
   &\|\gamma_h(u,M)\|_h^2
   := \sum_{j=1}^n \Bigl(\|\gamma_j(u_j)\|_j^2 + \|\gamma_j(M_j)\|_j^2\Bigr)
   && (u,M\in H^2(I)),\\
   &\|z\|_{2,T}^2 := \|z\|_T^2 + \|z''\|_T^2 && (z\in H^2(T),\ T\in\cT),\\
   &\|z\|_2^2 := \|z\|^2 + \|z''\|^2 && (z\in H^2(I)),\\
   &\|z\|_{2,\cT}^2 := \|z\|^2 + \vdual{z''}{z''}_\cT && (z\in H^2(\cT)),
\end{align*}
these spaces are normed as follows,
\[
   \|(u,M,\tq)\|_U^2 := \|u\|^2 + \|M\|^2 + \|\tq\|_h^2
   \quad\text{and}\quad
   \|(z,W)\|_V^2 := \|z\|_{2,\cT}^2 + \|W\|_{2,\cT}^2
\]
for $(u,M,\tq)\in U_a(t)$ ($a\in \{cc, cs, cf, ss\}$) and $z,W\in H^2(\cT)$.
Finally, defining the functional $L_f(z,W):=-\vdual{f}{z}$ and bilinear form
\[
   b_t((u,M,\tq),(z,W))
   :=
   \vdual{u}{W''}_\cT + \vdual{M}{W+z''-t^2W''}_\cT + \<\tq,(z,W)\>_t,
\]
the variational formulation is:
\begin{equation} \label{VF}
   (u,M,\tq)\in U_a(t):\quad
   b_t((u,M,\tq),(z,W)) = L_f(z,W)\quad\forall (z,W)\in V.
\end{equation}

Our first main result is the well-posedness of \eqref{VF}.

\begin{theorem} \label{thm_VF}
Given $f\in L_2(I)$, $a\in\{cc,cs,cf,ss\}$, and $t\in [0,1]$, there exists a unique solution
$(u,M,\tq)\in U_a(t)$ of \eqref{VF}. It satisfies
\[
   \|(u,M,\tq)\|_{U(t)} \lesssim \|f\|
\]
with a hidden constant that is independent of $\cT$, $f$, and $t$. Furthermore,
$(u,M)$ solves \eqref{prob}, and $\tq=\gamma_h(u,M)$.
\end{theorem}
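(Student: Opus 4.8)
The plan is to establish well-posedness of the variational formulation \eqref{VF} via the standard Babu\v{s}ka--Ne\v{c}as framework: it suffices to verify (i) boundedness of $b_t$ on $U_a(t)\times V$, (ii) an inf--sup condition $\sup_{(z,W)\in V}|b_t((u,M,\tq),(z,W))|/\|(z,W)\|_V \gtrsim \|(u,M,\tq)\|_U$ uniformly in $\cT$ and $t$, and (iii) non-degeneracy of the test space, i.e. that $b_t((u,M,\tq),(z,W))=0$ for all $(u,M,\tq)$ implies $(z,W)=0$. Because this is an ultraweak formulation with broken test space, the usual route is to prove the adjoint (or ``strong'') version of the inf--sup condition: given arbitrary data in $V'$, solve the adjoint problem on the full (unbroken) space $H^2(I)\times H^2(I)$ with the appropriate complementary boundary conditions, obtain an a priori estimate for this boundary value problem, and then use the broken-test-space machinery (as in our earlier DPG papers) to conclude. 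Boundedness (i) is routine from Cauchy--Schwarz once one checks that the point-evaluation pairing $\<\tq,(z,W)\>_t$ is controlled by $\|\tq\|_h\,\|(z,W)\|_V$; this is exactly why the discrete norm \eqref{discrete} was defined the way it was — it is, up to the local scaling, the norm dual to the $H^2(I_j)$-trace norm, so the pairing is bounded by construction.

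The core of the argument, and the main obstacle, is the $t$-uniform a priori bound for the adjoint boundary value problem. Concretely, I expect the lemmas in Section~\ref{sec_proof} to reduce matters to showing that the operator $(z,W)\mapsto (W''+\text{(boundary data)},\,W+z''-t^2W'')$, acting between suitable $H^2$-type spaces with the boundary conditions dual to those in \eqref{BC}, is an isomorphism with norm and inverse norm bounded independently of $t\in[0,1]$. One first solves $-W''=\text{datum}$ (a second-order two-point problem, trivially $t$-independent and well-posed for each boundary-condition type), then substitutes into $z''=\text{datum}-W+t^2W''$ and integrates twice. The danger spots are the limit $t\to 0$, where the second equation degenerates in its highest-order term but is in fact \emph{regularized} rather than singularly perturbed (the $t^2 W''$ term is lower-order relative to solving for $z$), and the boundary conditions for the clamped cases, which couple $u'$ and $t^2 M'$ and must be handled so that the constant stays bounded as $t\to0$ — here one exploits that $W\in H^2$ already gives $W'$ control, so the $t^2$-coupling is a harmless perturbation. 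I would treat the four boundary-condition types by a single argument wherever possible, isolating only the genuinely different well-posedness inputs for the underlying second-order problems (e.g.\ the pure-Neumann-type issue does not arise because at least one deflection value is prescribed in every case, so rigid-body modes are excluded).

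Once the $t$-uniform isomorphism property of the adjoint operator is in hand, the remaining steps are essentially bookkeeping: the inf--sup constant for \eqref{VF} inherits the bound, injectivity of $b_t$ in the test argument follows from the same isomorphism (a test pair annihilating all of $U_a(t)$, in particular all of $L_2(I)\times L_2(I)$ and all trace data in $\tU_a(t)$, must satisfy $W''=0$, $W+z''-t^2W''=0$ and have vanishing complementary traces, forcing $z=W=0$), and the consistency claim — that the unique DPG solution $(u,M)$ actually solves \eqref{prob} with $\tq=\gamma_h(u,M)$ — is the converse of the integration-by-parts derivation \eqref{VFa}: choosing $(z,W)$ supported in a single element and smooth recovers the PDEs distributionally, which together with $f\in L_2(I)$ upgrades $u,M$ to $H^2(I)$; then varying the element traces recovers single-valuedness of the traces and the boundary conditions, whence $(u,M)\in\HH_a(t)$ and $\tq=\gamma_h(u,M)$. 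The stability estimate $\|(u,M,\tq)\|_{U(t)}\lesssim\|f\|$ is then immediate from the inf--sup bound applied to $L_f$, using $\|L_f\|_{V'}\le\|f\|$.
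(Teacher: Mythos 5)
Your overall architecture matches the paper's: boundedness plus two inf--sup conditions, with the trace pairing handled by a dual characterization of the discrete norm (Lemma~\ref{la_trace}), the annihilator characterization of continuous test functions (Lemma~\ref{la_jump}), $t$-uniform stability of the adjoint boundary value problem (Lemma~\ref{la_adj}), injectivity (Lemma~\ref{la_inj}), and the assembly via the broken-space framework of Carstensen--Demkowicz--Gopalakrishnan. However, your proposed proof of the adjoint estimate — ``first solve $-W''=\text{datum}$ (a second-order two-point problem, trivially $t$-independent and well-posed for each boundary-condition type), then integrate twice for $z$'' — would fail for the clamped-clamped and clamped-supported cases. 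The adjoint pair $(z,W)$ lives in $\HH_a(t)$, so for $a=cc$ the four boundary conditions are $z(0)=z(1)=0$ and $z'(x)=t^2W'(x)$ at $x=0,1$: \emph{no} stand-alone boundary conditions are imposed on $W$ (and only one, $W(1)=0$, for $a=cs$). The two free constants in $W$ are determined only through the coupling with the overdetermined $z$-equation, so the system cannot be decoupled sequentially, and the $t$-uniformity of the resulting bound is exactly the delicate point. The paper instead treats \eqref{adj} as a genuinely coupled mixed problem \eqref{VFadj} in $H^1_W\times H^1_z$, proving a $t$-robust inf--sup condition \eqref{is} for the off-diagonal form $\vdual{W'}{\dz'}$ together with coercivity of $\vdual{W}{\dW}+t^2\vdual{W'}{\dW'}$, and then recovers the $H^2$-bounds from the equations; the degenerate case $t=0$ is handled separately as a coercive fourth-order problem for $z$ alone.

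A second, smaller gap: you assert that the bound $\<\tq,(z,W)\>_t\lesssim\|\tq\|_h\|(z,W)\|_V$ holds ``by construction'' of the norm \eqref{discrete}. The norm in \eqref{discrete} is by construction the $H^2(I_j)$-norm of the \emph{cubic} Hermite extension $v_u$ of the nodal data, whereas the quantity that is dual to the test norm is the \emph{minimal} $H^2(I_j)$-extension norm, attained by the solution $w_u$ of $w^{(4)}+w=0$. The content of Lemma~\ref{la_trace} (Step 2) is precisely that $\|v_u\|_{2,I_h}\simeq\|w_u\|_{2,I_h}$ uniformly in $h\in(0,1]$; the paper proves this with a duality argument ($z^{(4)}=v_u$, $\gamma(z)=0$) combined with the Poincar\'e--Friedrichs inequality in $H^2_0(I_h)$, exploiting that $v_u$ minimizes the $H^2$-seminorm while $w_u$ minimizes the full norm. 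Without this equivalence neither the boundedness nor the trace inf--sup condition is established, so this step cannot be dismissed as bookkeeping. The remaining parts of your outline (injectivity, consistency with \eqref{prob}, and the final stability bound from $\|L_f\|_{V'}\le\|f\|$) agree with the paper.
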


\section{The DPG method} \label{sec_DPG}

Our DPG method is a Petrov--Galerkin scheme based on formulation \eqref{VF}.
We consider a finite-dimensional subspace $U_{a,h}(t)\subset U_a(t)$
and select test functions $(z,W)\in \ttt(U_{a,h}(t))$ where $\ttt$ is the trial-to-test operator
$\ttt:\;U_a(t)\to V$ defined by
\[
   \ip{\ttt(u,M,\tq)}{(z,W)}_V = b_t((u,M,\tq),(z,W))
   \quad \forall (z,W) \in V.
\]
Here, $\ip{\cdot}{\cdot}_V$ is the inner product in $V$,
\[
   \ip{(z,W)}{(\dz,\dW)}_V = \vdual{z}{\dz} + \vdual{z''}{\dz''} + \vdual{W}{\dW} + \vdual{W''}{\dW''}.
\]
Our DPG approximation $(u_h,M_h,\tq_h)\in U_{a,h}(t)$ is defined by
\begin{equation} \label{DPG}
   b_t((u_h,M_h,\tq_h),\ttt(\du,\dM,\dtq)) = L(\ttt(\du,\dM,\dtq))
   \quad \forall (\du,\dM,\dtq)\in U_{a,h}(t).
\end{equation}
Since the DPG-approximation minimizes the residual in the $V$-norm, cf.~\cite{DemkowiczG_11_ADM},
and since the $U$-norm is uniformly equivalent to the dual norm of $V$, cf.~\eqref{infsup} below,
our approximation is quasi-optimal in the $U$-norm. Let us formulate this result.

\begin{theorem} \label{thm_DPG}
Let $a\in\{cc,cs,cf,ss\}$, $f\in L_2(I)$ and $t\in [0,1]$ be given.
There exists a unique solution $(u_h,M_h,\tq_h)\in U_{a,h}(t)$ to \eqref{DPG}. It satisfies
\[
   \|u-u_h\| + \|M-M_h\| + \|\tq-\tq_h\|_h
   \lesssim
   \inf\{\|u-v_h\| + \|M-Q_h\|;\; (v_h,Q_h)\in U_{a,h}(t)\}
\]
with a hidden constant that is independent of $f,\cT,U_{a,h}(t)$, and $t$.
\end{theorem}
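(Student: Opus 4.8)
The plan is to derive Theorem~\ref{thm_DPG} from the abstract DPG theory, reducing everything to two ingredients: the well-posedness result of Theorem~\ref{thm_VF} and the general best-approximation property of residual-minimizing Petrov--Galerkin schemes. First I would recall the standard framework from~\cite{DemkowiczG_11_ADM}: since $(z,W)\mapsto b_t((u,M,\tq),(z,W))$ defines an element of $V'$, and since $V$ is a Hilbert space, the trial-to-test operator $\ttt$ is well defined, and the DPG solution~\eqref{DPG} is exactly the minimizer over $U_{a,h}(t)$ of the residual $\|L_f - b_t((v_h,Q_h,\tq_h),\cdot)\|_{V'}$. Equivalently, $(u_h,M_h,\tq_h)$ is the best approximation to the exact solution $(u,M,\tq)$ with respect to the \emph{energy norm} $\|(v,Q,\tq)\|_{E}:=\|b_t((v,Q,\tq),\cdot)\|_{V'}=\sup_{0\neq(z,W)\in V} b_t((v,Q,\tq),(z,W))/\|(z,W)\|_V$. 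Existence and uniqueness of $(u_h,M_h,\tq_h)$ follow because $\|\cdot\|_E$ is a genuine norm on $U_a(t)$, which is a consequence of injectivity of $b_t(\cdot,\cdot)$ in its first argument, itself part of Theorem~\ref{thm_VF}.

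Next I would record the key quantitative fact that Theorem~\ref{thm_VF} really encodes, namely the uniform inf--sup stability: there exist constants $0<c_1\le c_2$, independent of $\cT$, $f$, and $t$, with
\begin{equation}\label{infsup}
   c_1\,\|(v,Q,\tq)\|_{U(t)} \;\le\; \sup_{0\neq(z,W)\in V}
   \frac{b_t((v,Q,\tq),(z,W))}{\|(z,W)\|_V} \;\le\; c_2\,\|(v,Q,\tq)\|_{U(t)}
\end{equation}
for all $(v,Q,\tq)\in U_a(t)$. The upper bound (continuity of $b_t$) is elementary from the definitions of the norms, using Cauchy--Schwarz on the $L_2$-inner products together with the boundedness of the trace pairing $\<\tq,(z,W)\>_t$ by $\|\tq\|_h\,\|(z,W)\|_V$ --- indeed the discrete norm $\|\cdot\|_h$ in~\eqref{discrete} is constructed precisely so that this holds with a $t$-independent constant. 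The lower bound is the substantive content of Theorem~\ref{thm_VF}: well-posedness of~\eqref{VF} with the stated $t$-uniform a priori bound $\|(u,M,\tq)\|_{U(t)}\lesssim\|f\|$ is equivalent (via Babu\v{s}ka--Ne\v{c}as and the surjectivity onto $V'$ of $b_t(\cdot,\cdot)$ acting on $U_a(t)$) to~\eqref{infsup}. So I would simply cite Theorem~\ref{thm_VF} for~\eqref{infsup}.

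Given~\eqref{infsup}, the energy norm $\|\cdot\|_E$ is uniformly equivalent to $\|\cdot\|_{U(t)}$ on $U_a(t)$: the definition of $\|\cdot\|_E$ coincides with the supremum in~\eqref{infsup}. Therefore the abstract best-approximation estimate $\|(u,M,\tq)-(u_h,M_h,\tq_h)\|_E = \inf_{(v_h,Q_h,\tq_h')\in U_{a,h}(t)} \|(u,M,\tq)-(v_h,Q_h,\tq_h')\|_E$ converts directly into
\[
   \|u-u_h\| + \|M-M_h\| + \|\tq-\tq_h\|_h
   \;\lesssim\;
   \inf_{(v_h,Q_h,\tq_h')\in U_{a,h}(t)}\bigl(\|u-v_h\| + \|M-Q_h\| + \|\tq-\tq_h'\|_h\bigr),
\]
with constants $c_2/c_1$, independent of $f,\cT,U_{a,h}(t),t$. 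The final step is a small refinement: the infimum on the right, which involves $\tq$-components, can be dropped, because for any $(v_h,Q_h)$ in the trace-free part of the discrete trial space the natural choice $\tq_h'=\gamma_h(v_h,Q_h)$ makes the trace contribution comparable to $\|v_h-u\|+\|Q_h-M\|$ in $H^2$-type norms; more simply, if $U_{a,h}(t)$ is chosen (as is standard) so that its members have the form $(v_h,Q_h,\gamma_h(v_h,Q_h))$ for $(v_h,Q_h)$ in a conforming subspace, then $\|\tq-\tq_h'\|_h$ is already controlled and the stated infimum over $(v_h,Q_h)$ suffices. I do not expect any genuine obstacle here: every hard estimate is inside Theorem~\ref{thm_VF}, and the remaining work is the routine translation of uniform inf--sup stability into quasi-optimality. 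The only point requiring a line of care is verifying the upper bound in~\eqref{infsup} with a $t$-independent constant, i.e.\ checking that the weighting of the discrete norm~\eqref{discrete} dominates the trace pairing uniformly in $t\in[0,1]$ --- but this is a direct computation on each element $I_j$ and, in any case, is part of what~\eqref{infsup} asserts.
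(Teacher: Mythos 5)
Your proposal is correct and follows essentially the same route as the paper, which proves Theorem~\ref{thm_DPG} in one line: the DPG solution minimizes the residual in $V'$, and the inf--sup estimate \eqref{infsup} from the proof of Theorem~\ref{thm_VF} (together with boundedness of $b_t$, using Lemma~\ref{la_trace} for the trace pairing) makes the energy norm uniformly equivalent to the $U$-norm, so residual minimization is quasi-optimality. The only wrinkle is your justification for dropping the trace term from the infimum --- the choice $\tq_h'=\gamma_h(v_h,Q_h)$ is not meaningful for discontinuous $L_2$ trial functions and point values are not controlled by $L_2$ norms; the clean reason (left implicit by the paper as well) is that $\tU_a(t)$ is finite-dimensional of dimension $4n$ and is included in full in the discrete space, so its best-approximation error vanishes.
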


\section{Proof of Theorem~\ref{thm_VF}} \label{sec_proof}

We split the proof of Theorem~\ref{thm_VF} into several parts.
This is standard procedure, cf., e.g., \cite{CarstensenDG_16_BSF,FuehrerHH_TOB,FuehrerHS_UFR}.
Specifically, we closely follow the presentation from \cite{FuehrerHS_UFR}.
The steps consist in,
(a) characterizing the norms for the trace space $\tU$ which is finite-dimensional
(Lemma~\ref{la_trace}),
(b) checking that test functions become continuous when annihilated by trace elements
(Lemma~\ref{la_jump}),
(c) proving stability of the adjoint problem (Lemma~\ref{la_adj}), and
(d) checking injectivity of the operator that is adjoint to problem \eqref{prob}
(Lemma~\ref{la_inj}).

\begin{lemma} \label{la_trace}
\[
   \sup_{(z,W) \in V\setminus\{0\}}
   \frac{\<\tq,(z,W)\>_t}{\|(z,W)\|_V} \simeq \|\tq\|_{h,t}
   \quad \forall\tq \in \tU,\ t\in [0,1].
\]
\end{lemma}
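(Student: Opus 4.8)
The plan is to identify the supremum on the left-hand side as a norm on a finite-dimensional space and then show that this norm coincides with $\|\tq\|_{h,t}$ up to uniform constants, with all constants independent of $t\in[0,1]$ (and of the mesh, in the sense that the equivalence is local to each element). Since $\tU\cong\R^{4n+4}$ is finite-dimensional, the map $\tq\mapsto\sup_{(z,W)\neq 0}\<\tq,(z,W)\>_t/\|(z,W)\|_V$ is indeed a norm (it is positive-definite because $\gamma_h$ is surjective onto the point-evaluation data, so no nonzero $\tq$ annihilates all of $V$), and both sides decouple over the elements $I_j$: the bilinear form $\<\tq,(z,W)\>_t$ is a sum of contributions $\sbrac{\cdot}{\cdot}_j$ involving only $z_j,W_j$ and the four endpoint data $\gamma_j(u_j),\gamma_j(M_j)$, and $\|(z,W)\|_V^2=\sum_j(\|z_j\|_{2,I_j}^2+\|W_j\|_{2,I_j}^2)$ likewise splits. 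Hence it suffices to prove the equivalence on a single element $I_j=(x_{j-1},x_j)$ of length $h_j$.

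On a single element the computation is explicit. First I would observe that for fixed endpoint data $\gamma_j(u_j),\gamma_j(M_j)$, the dual expression
\[
   \sup_{(z_j,W_j)\neq 0}\frac{-\brac{u}{W'}_j + \brac{M'}{z}_j - \brac{M}{(z'-t^2W')}_j + \brac{(u'-t^2 M')}{W}_j}{\bigl(\|z_j\|_{2,I_j}^2+\|W_j\|_{2,I_j}^2\bigr)^{1/2}}
\]
is, by Riesz representation in $H^2(I_j)$ with the norm $\|\cdot\|_{2,I_j}$, equal to the $H^2(I_j)\times H^2(I_j)$-norm of the Riesz representatives of the functionals $z_j\mapsto \brac{M'}{z}_j-\brac{M}{z'}_j$ and $W_j\mapsto -\brac{u}{W'}_j+t^2\brac{M}{W'}_j+\brac{(u'-t^2M')}{W}_j$. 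Equivalently — and this is the route I expect to be cleanest — I would show that the extremal test functions are the $H^2(I_j)$-biharmonic (i.e.\ $(\mathrm{id}-\partial^4)$-harmonic) extensions of the prescribed Dirichlet/Neumann endpoint data, reduce the supremum to a quadratic form in the eight real numbers $\gamma_j(u_j),\gamma_j(M_j)$, and compute that quadratic form. The explicit coefficients in \eqref{discrete} — the $h_j/420$ block (which is the mass-matrix part, scaling like $h_j$) and the $2/h_j^3$ block (the stiffness part, scaling like $h_j^{-3}$) — are exactly what one gets from this computation when $t=0$; the general-$t$ norm $\|\cdot\|_{h,t}$ should be the corresponding $t$-dependent quadratic form, and one checks the $t$-terms enter with bounded coefficients uniformly for $t\in[0,1]$.

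The main obstacle is the bookkeeping of the explicit quadratic form and verifying the $t$-uniformity: one must confirm that the cross-terms coupling $\gamma_j(u_j)$ with $\gamma_j(M_j)$ that are introduced by the $t^2$ contributions do not degrade the equivalence as $t\to 0$ or $t\to 1$, i.e.\ that $\|\tq\|_{h,t}\simeq\|\tq\|_{h}$ (or at least that both sides of the claimed equivalence pick up exactly the same $t$-dependence). This is where I would be careful: rather than invert an $8\times 8$ matrix by hand, I would argue structurally — the test-norm $\|\cdot\|_V$ does not involve $t$, so the only $t$-dependence on the left is through the numerator, which depends on $t$ only through the benign terms $t^2\brac{M}{W'}_j$ and $-t^2\brac{M'}{W}_j$; since $t^2\in[0,1]$, these are uniformly bounded perturbations of the $t=0$ form, and a standard continuity/compactness argument on the finite-dimensional unit sphere (or a direct Cauchy--Schwarz estimate splitting off the $t^2$ terms) upgrades the $t=0$ equivalence to a uniform one. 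I would also double-check the definiteness of the $t=0$ quadratic form (positivity of \eqref{discrete}), which follows because it arises as a genuine supremum of a nondegenerate pairing; this can be seen from the mass-plus-stiffness structure, the stiffness block being the Hermite-cubic element stiffness matrix for $-\partial^2$ which is positive semidefinite with kernel the constants, and the mass block being positive definite, so the sum is positive definite.
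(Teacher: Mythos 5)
Your overall skeleton (localize to one element, dualize, reduce to a quadratic form in the endpoint data) matches the paper's, but there are two genuine gaps. First, the $t$-uniformity. Splitting off the $t^2$ terms by Cauchy--Schwarz, or invoking compactness on the unit sphere of the finite-dimensional space $\tU$, does not give what you need: a bounded perturbation of a pairing can destroy the lower bound of the induced dual norm, and any compactness constant on $\tU\cong\R^{4n+4}$ may depend on $n$ and on the element lengths $h_j$, which is exactly the dependence the lemma must exclude. The paper's mechanism is different and essential: the substitution $(z,W)\mapsto(z+t^2W,W)$ is an automorphism of $V$ with norm bounds uniform in $t\in[0,1]$, and under it the pairing becomes $t$-independent exactly (one checks $\<\tq,(z,W)\>_t=\<\tq,(z-t^2W,W)\>_0$). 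This removes $t$ from the problem in one stroke; without it your ``benign perturbation'' step does not close.

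Second, the identification of the resulting quadratic form with \eqref{discrete} is not exact, contrary to what you assert. The Riesz representative / extremal test function for the local functional solves $w^{(4)}+w=0$ with the prescribed endpoint data, and its $H^2$-norm is a transcendental function of $h_j$ --- not the polynomial expressions in \eqref{discrete}. The matrices in \eqref{discrete} are the mass and $H^2$-seminorm stiffness matrices of the \emph{cubic Hermite} extension $v_u$ (the seminorm minimizer, $v_u^{(4)}=0$), which is a different function. The lemma therefore needs the uniform-in-$h$ equivalence $\|w_u\|_{2,I_h}\simeq\|v_u\|_{2,I_h}$, and this is the technical heart of the paper's proof: one direction is free ($w_u$ minimizes the full norm, $v_u$ the seminorm), but the bound $\|v_u\|_{I_h}\lesssim\|w_u\|_{2,I_h}$ requires a duality argument (solve $z^{(4)}=v_u$ with $\gamma(z)=0$, integrate by parts using $\vdual{v_u''}{z''}_{I_h}=0$, and apply the Poincar\'e--Friedrichs inequality with its $h^2$ scaling). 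Your plan skips this equivalence entirely; as written, computing ``the'' quadratic form would either produce the wrong (non-polynomial) expression or leave the claimed identity with \eqref{discrete} unjustified. (Minor: the second block of \eqref{discrete} is the biharmonic stiffness matrix, whose kernel is the affine functions, not the $-\partial^2$ stiffness matrix with kernel the constants.)
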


\begin{proof}
{\bf Step 1.}
We start by making three observations. First,
\begin{equation} \label{1a}
   (z,W) \mapsto (\tilde z,\widetilde W):=(z+t^2W,W)
\end{equation}
is an automorphism in $V=H^2(\cT)\times H^2(\cT)$ that is uniformly bounded with respect to $t$
in both directions. Second,
\begin{align} \label{1b}
   \<\gamma_h(u,M),(\tilde z,\widetilde W)\>_t 
   &=
   \sum_{j=1}^n \Bigl(-\brac{u}{W'}_j + \brac{M'}{z}_j - \brac{M}{z'}_j + \brac{u'}{W}_j\Bigr)
   \nonumber\\
   &=
   -\vdual{u}{W''}_\cT + \vdual{u''}{W} - \vdual{M}{z''}_\cT + \vdual{M''}{z}
\end{align}
holds for $u,M\in H^2(I)$ and $z,W\in H^2(\cT)$,
and third, as in \cite[Lemma 1]{FuehrerHH_TOB} for the Laplacian, one proves that
\begin{multline*}
   \sup_{\dv\in H^2(T)\setminus\{0\}}
   \frac {\vdual{v}{\dv''}_T-\vdual{v''}\dv_T}{\|\dv\|_{2,T}}\\
   =
   \inf\{\|w\|_{2,T}; w\in H^2(T),\ w(x_i)=v(x_i),\ w'(x_i)=v'(x_i),\ i=j-1,j\}
   \quad (T=I_j\in\cT)
\end{multline*}
holds for any $v\in H^2(T)$. Therefore, one deduces that
\begin{multline*}
   \sup_{(z,W) \in V\setminus\{0\}}
   \frac{\<\tq,(z,W)\>_t}{\|(z,W)\|_V}
   \simeq\\
   \inf\{\|v\|_2;\; v\in H^2(I),\ \gamma_h(v)=\tq_1\}
   +
   \inf\{\|v\|_2;\; v\in H^2(I),\ \gamma_h(v)=\tq_2\}
\end{multline*}
holds where $\tq=(\tq_1,\tq_2)$.
Noting that the norms on the right-hand side localize (the minima can be calculated
element-wise), it remains to show the local relation
\begin{equation} \label{2}
   \inf\{\|v\|_{2,T};\; v\in H^2(T),\ \gamma_j(v)=\gamma_j(u)\}
   \simeq
   \|\gamma_j(u)\|_j
   \quad \forall u\in H^2(T),\ T=I_j,\ j=1,\ldots, n.
\end{equation}
{\bf Step 2.} To prove \eqref{2} it
is enough to consider an element $I_h=(0,h)$ of length $h\in (0,1]$. Let $u\in H^2(I_h)$ be
given and set $\tq_u=\gamma(u):=(u(0), u'(0), u(h), u'(h))$. It follows that
\[
   \inf\{\|v\|_{2,I_h};\; v\in H^2(T),\ \gamma(v)=\tq_u\}
   =
   \|w_u\|_{2,I_h}
\]
where $w_u\in H^2(I_h)$ solves
\[
   w_u^{(4)}+w_u= 0\quad\text{in}\ I_h,\quad \gamma(w_u)=\tq_u.
\]
We also define $v_u\in H^2(I_h)$ as the solution to
\begin{equation} \label{v}
   v_u^{(4)}=0\quad\text{in}\ I_h,\quad \gamma(v_u)=\tq_u.
\end{equation}
It follows that $\|w_u\|_{2,I_h}\le \|v_u\|_{2,I_h}$ (since $w_u$ minimizes the $H^2(I_h)$-norm)
and $|v_u|_{2,I_h}:=\|v_u''\|_{I_h}\le |w_u|_{2,I_h}\le \|w_u\|_{2,I_h}$ (since $v_u$ minimizes the
$H^2(I_h)$-seminorm). Proving that $\|v_u\|_{I_h}\lesssim \|w_u\|_{2,I_h}$ then gives the uniform
equivalence $\|v_u\|_{2,I_h}\simeq\|w_u\|_{2,I_h}$. To show this, let $z\in H^2(I_h)$ solve
\[
   z^{(4)} = v_u\quad\text{in}\ I_h,\quad \gamma(z)=0.
\]
Then, integrating by parts, using that $\gamma(w_u)=\gamma(v_u)$
and $\vdual{v_u''}{z''}_{I_h}=0$ by \eqref{v} since $\gamma(z)=0$, we conclude that
\begin{align*}
   \|v_u\|_{I_h}^2 = \vdual{v_u}{z^{(4)}}_{I_h}
   &= \vdual{w_u}{z^{(4)}}_{I_h} - \vdual{w_u''}{z''}_{I_h}\\
   &\le
   \|w_u\|_{2,I_h} \bigl(\|z''\|_{I_h}^2 + \|z^{(4)}\|_{I_h}^2\bigr)^{1/2}
   =
   \|w_u\|_{2,I_h}
   \bigl(\|z''\|_{I_h}^2 + \|v_u\|_{I_h}^2\bigr)^{1/2}.
\end{align*}
In the last step we used the relation $z^{(4)}=v_u$.
To conclude that $\|v_u\|_{I_h}\lesssim \|w_u\|_{2,I_h}$ we are left to show that
$\|z''\|_{I_h}\lesssim \|v_u\|_{I_h}$. This is true by the definition of $z$ and
the Poincar\'e--Friedrichs inequality in $H^2_0(I_h)$,
\[
   \|z''\|_{I_h}^2 = \vdual{v_u}{z}_{I_h} \le \|v_u\|_{I_h} \|z\|_{I_h}
   \lesssim h^2 \|v_u\|_{I_h} \|z''\|_{I_h}.
\]
We have shown that the solution $v_u$ of \eqref{v} satisfies
\[
   \inf\{\|v\|_{2,I_h};\; v\in H^2(T),\ \gamma(v)=\tq_u\}
   \simeq
   \|v_u\|_{2,I_h} \quad\forall u\in H^2(I_h).
\]
{\bf Step 3.} We calculate the $H^2(I_h)$-norm of $v_u$
which is a cubic polynomial. Ordering the boundary values as $(u(0),u(h),u'(0),u'(h))$ and
using standard Hermite polynomials,
the $L_2(I_h)$-norm and $H^2(I_h)$-seminorm are induced by the mass and stiffness matrices,
respectively,
\[
   \frac h{420}\begin{pmatrix}
      156 & 54 & 22h & -13h\\
      54 & 156 & 13h & -22h\\
      22h & 13h & 4h^2 & -3h^2\\
      -13h & -22h & -3h^2 & 4h^2
   \end{pmatrix},
   \qquad
   \frac 2{h^3}\begin{pmatrix}
      6 & -6 & 3h & 3h\\
      -6 & 6 & -3h & -3h\\
      3h & -3h & 2h^2 & h^2\\
      3h & -3h & h^2 & 2h^2
   \end{pmatrix}.
\]
They give the weights for the discrete norms, cf.~\eqref{discrete}.
We have thus proved \eqref{2} and the lemma.
\end{proof}

\begin{lemma} \label{la_jump}
Let $(z,W)\in V$, $a\in\{cc, cs, cf, ss\}$, and $t\in [0,1]$ be given. The relation
\[
   (z,W)\in\HH_a(t)
   \quad\Leftrightarrow\quad
   \<\tq,(z,W)\>_t = 0\quad\forall \tq\in\tU_a(t)
\]
holds true.
\end{lemma}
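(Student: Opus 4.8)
Since $\tU_a(t)=\gamma_h(\HH_a(t))$ and $\<\tq,(z,W)\>_t=\<\gamma_h(u,M),(z,W)\>_t$ whenever $\gamma_h(u,M)=\tq$, the right-hand condition is equivalent to $\<\gamma_h(u,M),(z,W)\>_t=0$ for all $(u,M)\in\HH_a(t)$. The plan is to prove the nontrivial implication ``$\Rightarrow$'' in two stages: first, that any $(z,W)\in V$ satisfying this is globally regular, $z,W\in H^2(I)$; second, that it then satisfies exactly the boundary conditions defining $\HH_a(t)$. The reverse implication ``$\Leftarrow$'' will drop out of the boundary computation in the second stage. Throughout we use the identity
\[
   \<\gamma_h(u,M),(z,W)\>_t
   = -\vdual{u}{W''}_\cT+\vdual{u''}{W}+\vdual{M''}{z-t^2W}-\vdual{M}{(z-t^2W)''}_\cT,
\]
valid for $u,M\in H^2(I)$ and $(z,W)\in V$, which follows by elementwise integration by parts (equivalently, by applying the automorphism \eqref{1a} to the test argument and using \eqref{1b}).

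For the regularity stage, note that $(u,0)\in\HH_a(t)$ and $(0,M)\in\HH_a(t)$ for every $a$ whenever $u,M\in C_c^\infty(I)$, since such functions vanish near both ends and hence satisfy all boundary conditions trivially; these are therefore admissible test functions. Testing with $(u,0)$, $u\in C_c^\infty(I)$, the identity above reduces to $-\vdual{u}{W''}_\cT+\vdual{u''}{W}$, and summing the boundary contributions of the elementwise integration by parts (those at $x_0=0$ and $x_n=1$ vanish by compact support) gives
\[
   \<\gamma_h(u,0),(z,W)\>_t
   = \sum_{i=1}^{n-1}\Bigl(u(x_i)\bigl(W'|_{I_{i+1}}-W'|_{I_i}\bigr)(x_i)
                          -u'(x_i)\bigl(W|_{I_{i+1}}-W|_{I_i}\bigr)(x_i)\Bigr).
\]
Since the $2(n-1)$ numbers $u(x_i),u'(x_i)$, $i=1,\dots,n-1$, can be prescribed arbitrarily, vanishing of the left-hand side forces all interior jumps of $W$ and $W'$ to vanish, i.e.\ $W\in H^2(I)$. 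Testing analogously with $(0,M)$, $M\in C_c^\infty(I)$, forces the interior jumps of $z-t^2W$ and of its derivative to vanish; together with $W\in H^2(I)$ this gives $z\in H^2(I)$.

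Now $z,W\in H^2(I)$, so a further global integration by parts turns the identity above into a pure boundary expression,
\[
   \<\gamma_h(u,M),(z,W)\>_t
   = \bigl[\,u'W-uW'+M'(z-t^2W)-M(z'-t^2W')\,\bigr]_0^1
   = \bigl[\,\psi W-uW'+M'z-M\phi\,\bigr]_0^1,
\]
where $\psi:=u'-t^2M'$ and $\phi:=z'-t^2W'$. At a clamped, supported, or free end the boundary condition on $(u,M)$ prescribes, respectively, the pair $(u,\psi)$, $(u,M)$, or $(M,M')$ to be zero there, and the corresponding condition on $(z,W)$ prescribes $(z,\phi)$, $(z,W)$, or $(W,W')$ to be zero; the boundary form above pairs $\psi$ with $W$, $u$ with $W'$, $M'$ with $z$, and $M$ with $\phi$. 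Demanding that the right-hand side vanish for all $(u,M)\in\HH_a(t)$ kills, at each end, precisely the $(z,W)$-quantity partnered with whichever of $u,\psi,M,M'$ remains unconstrained there; running through the three end types one checks case by case that these are exactly the quantities the corresponding $(z,W)$-condition sets to zero -- for instance, at a free end for $(u,M)$ the terms $M'z$ and $M\phi$ drop out and vanishing of $\psi W-uW'$ for arbitrary $\psi,u$ forces $W=W'=0$ -- so $(z,W)\in\HH_a(t)$. Conversely, if $(z,W)\in\HH_a(t)$ then at each end every summand of the boundary form has a factor that vanishes, either by the $(u,M)$-condition or by the matching $(z,W)$-condition, so the whole expression is zero; this proves ``$\Leftarrow$''.

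The only genuinely delicate point is the regularity stage: one must verify that the piecewise pairing $\vdual{\cdot}{\cdot}_\cT$ combined with elementwise integration by parts yields exactly the interior-jump sum displayed above with no spurious endpoint terms -- this is where compact support of the $C_c^\infty(I)$ test functions enters -- and that the $2(n-1)$ interior jump degrees of freedom can be probed independently, which is clear. What remains is a finite check over $a\in\{cc,cs,cf,ss\}$; thanks to the symmetric way $u,\psi,M,M'$ enter the boundary form, and to the fact that the unconstrained endpoint values of functions in $\HH_a(t)$ may be prescribed arbitrarily, this is short.
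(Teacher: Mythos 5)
Your proof is correct and follows essentially the same route as the paper's (much terser) argument: integration by parts for ``$\Leftarrow$'', and for ``$\Rightarrow$'' the substitution \eqref{1a} together with identity \eqref{1b}, first testing with compactly supported $(u,0)$ and $(0,M)$ to kill the interior jumps of $W$, $W'$ and of $z-t^2W$, $(z-t^2W)'$ (hence $z,W\in H^2(I)$), and then reading off the boundary conditions from the resulting endpoint form. The case-by-case pairing of $(u,\psi,M,M')$ with $(W,W',z,\phi)$ at clamped, supported, and free ends is exactly the ``analogous'' boundary argument the paper leaves implicit, and your bookkeeping there is accurate.
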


\begin{proof}
The relation $(z,W)\in\HH_a(t)\Rightarrow \<\tq,(z,W)\>_t = 0$ for any $\tq\in \tU_a(t)$
follows from integration by parts. For the other direction we use the transformation
\eqref{1a} and relation \eqref{1b} to conclude that $(z,W)\in H^2(I)\times H^2(I)$.
The boundary conditions are obtained analogously.
\end{proof}

\begin{lemma} \label{la_adj}
Given $a\in\{cc, cs, cf, ss\}$, $t\in[0,1]$, and $g,k\in L_2(I)$,
there exists a unique solution $(z,W)\in\HH_a(t)$ to
\begin{equation} \label{adj}
   W'' = g,\quad W - t^2 W'' + z'' = k\quad \text{in}\ I.
\end{equation}
It satisfies
\[
   \|z\|_2 + \|W\|_2 \lesssim \|g\| + \|k\|
\]
with a hidden constant that is independent of $g,k$ and $t$.
\end{lemma}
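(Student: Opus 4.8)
The plan is to solve \eqref{adj} by a decoupling trick: the first equation $W''=g$ together with the boundary conditions on $W$ determines $W$ essentially as a second antiderivative of $g$, after which the second equation $z''=k-W+t^2g$ determines $z$ as a second antiderivative of an $L_2$ right-hand side, again subject to its boundary conditions. The subtlety is that the boundary conditions in \eqref{BC}/\eqref{adj} mix $z$ and $W$ (e.g.\ the clamped condition $z'(0)=t^2W'(0)$), so the two scalar problems are not fully independent; they must be solved in the right order. Concretely, I would first write down, for each of the four boundary-condition types, which data are prescribed on $W$ alone and which on $z$. For \textbf{ss} the split is clean: $W(0)=W(1)=0$ with $W''=g$ is a well-posed Dirichlet problem for $W$, hence $\|W\|_2\lesssim\|g\|$; then $z(0)=z(1)=0$ with $z''=k-W+t^2g$ gives $\|z\|_2\lesssim\|k\|+\|W\|_2+t^2\|g\|\lesssim\|g\|+\|k\|$, using $t\le 1$.

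For the other three cases the $W$-problem is still self-contained. For \textbf{cf}: the conditions $M(1)=M'(1)=0$ translate to $W(1)=W'(1)=0$, so $W''=g$ with $W(1)=W'(1)=0$ is an initial-value problem at $x=1$, explicitly solvable, giving $\|W\|_2\lesssim\|g\|$. For \textbf{cc} and \textbf{cs} the rotation condition $z'(i)=t^2W'(i)$ does \emph{not} constrain $W$, so $W$ is governed only by $W(0)=0$ (from $u(0)=0$? — no: here the clamped end gives $z(0)=0$ and $z'(0)=t^2W'(0)$, while the supported/clamped end at $x=1$ gives either $W(1)=0$ or $W(1)=W'(1)$-type data). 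I would check case by case that $W$ always satisfies two homogeneous scalar conditions making $W''=g$ uniquely solvable with $\|W\|_2\lesssim\|g\|$; for \textbf{cc} this needs a short argument since $W$ a priori only has $W(1)=0$-type info at one end — in fact for \textbf{cc} the $W$ equation has no boundary data at all from $W$ directly, and one must instead observe that the pair $(z,W)$ is determined by the \emph{full} coupled system, which reduces (eliminating $z''=k-W+t^2g$) to a fourth-order problem for $W$ with four boundary conditions. I expect this reduction to a single fourth-order ODE for $W$ (or for $u$), with the appropriate four boundary conditions read off from \eqref{BC}, to be the cleanest unified approach for all four cases.

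So the cleaner overall strategy: integrate the first equation to get $z''+W = k+t^2 g - (t^2W''-t^2W)$? — better, substitute $M''=-g$ style: from $W''=g$ we get $z''=k-W+t^2 g$, and differentiating twice, $z^{(4)}=k''-g+t^2 g''$ — not helpful since $g\in L_2$ only. Instead keep it as a system and use the variational/energy characterization: \eqref{adj} with boundary conditions from \eqref{BC} is the Euler--Lagrange system of a coercive quadratic form on $\HH_a(t)$ (essentially $\|W\|^2+\|z''\|^2+t^2\|W''\|^2$ type), so existence/uniqueness follows from Lax--Milgram once coercivity is established; the $t$-uniform bound then follows from $t$-uniform coercivity. \textbf{The main obstacle} is exactly this $t$-uniform coercivity/stability: when $t\to0$ the term $t^2W''$ degenerates, so control of $\|W\|_2$ (in particular $\|W''\|$) cannot come from the $t^2W''$ term and must instead come from $W''=g$ directly — which is fine — but one must be careful that the estimate $\|z\|_2+\|W\|_2\lesssim\|g\|+\|k\|$ does not hide a $t^{-1}$. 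I would handle this by doing $W$ first (from $W''=g$ plus its boundary data, $t$-independently), then $z$ second (from $z''=k-W+t^2 g$ plus its boundary data), so that $t$ enters only through the harmless factor $t^2\le 1$ multiplying $g$; the only place requiring care is verifying, for \textbf{cc}, that $W$ genuinely has enough boundary information on its own, which I will resolve by noting that eliminating $z$ from the two clamped conditions $z'(i)=t^2W'(i)$ and $z(i)=0$ leaves, via $z''=k-W+t^2 g$, two additional scalar constraints on $W$, restoring well-posedness of the scalar problem for $W$.
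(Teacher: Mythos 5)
Your route is genuinely different from the paper's. The paper never decouples the two ODEs: for $t\in(0,1]$ it tests \eqref{adj} with $H^1$ test functions carrying only the function-value boundary conditions and arrives at the symmetric saddle-point problem
\[
   \vdual{W}{\dW} + t^2\vdual{W'}{\dW'} - \vdual{z'}{\dW'} - \vdual{W'}{\dz'}
   = \vdual{k}{\dW} + \vdual{g}{\dz},
\]
which it resolves by Brezzi's theory; the $t$-uniformity comes from the observation that the inf--sup constant of $\vdual{W'}{\dz'}$ measured against the weighted norm $(\|W\|^2+t^2\|W'\|^2)^{1/2}$ is no worse than against the full $H^1$ norm, and the bounds on $\|W''\|$, $\|z''\|$ are then read off from the strong equations. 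For $t=0$ it switches to the coercive fourth-order problem $\vdual{z''}{\dz''}=\vdual{k}{\dz''}-\vdual{g}{\dz}$ with $W:=k-z''$. Your ODE-integration strategy avoids this machinery and is more explicit, and your instinct to determine $W$ first from $W''=g$ (so that $t$ enters the $z$-equation only through the harmless factor $t^2\le 1$) is exactly the right way to preclude a hidden $t^{-1}$. One side remark: your intermediate suggestion that \eqref{adj} is the Euler--Lagrange system of a \emph{coercive} quadratic form on $\HH_a(t)$ is not correct for $t>0$ --- it is a saddle point, which is precisely why the paper uses a mixed formulation --- but you discard that idea anyway.

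The one step you must actually carry out, and which your sketch only asserts, is the unique and $t$-uniform solvability of the reduced problem for $W$ in the $cc$ and $cs$ cases. Counting ``two scalar constraints for the two integration constants of $W''=g$'' is not enough: the constraints involve $t^2W'(0)$ and could in principle degenerate as $t\to 0$, so you must check that the resulting $2\times 2$ system has an inverse bounded uniformly in $t\in[0,1]$. Writing $W(x)=\alpha+\beta x+\int_0^x(x-s)g(s)\,ds$ and eliminating $z$ via $z''=k-W+t^2g$ together with $z(0)=z(1)=0$ and $z'(i)=t^2W'(i)$, the system for $(\alpha,\beta)$ in the $cc$ case has determinant $t^2+\tfrac{1}{12}$, and in the $cs$ case (using $W(1)=0$ and $z(1)=0$) determinant $t^2+\tfrac{1}{3}$; both are bounded below uniformly, so your plan does go through. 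But this computation is the heart of the robustness claim and cannot be left implicit; once it is supplied, the stability bound $\|z\|_2+\|W\|_2\lesssim\|g\|+\|k\|$ follows from the explicit representation of $W$ and $z$ as second antiderivatives of $L_2$ data.
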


\begin{proof}
We start with the case $t\in (0,1]$.
Testing the equations in \eqref{adj} with $\dz\in H^1(I)$ and $\dW\in H^1(I)$, respectively,
and boundary conditions
\begin{align*}
   &\dz(0)=\dz(1)=0        &&\hspace*{-8em} (\text{if}\ a=cc),\\
   &\dz(0)=\dz(1)=\dW(1)=0 &&\hspace*{-8em} (\text{if}\ a=cs),\\
   &\dz(0)=\dW(1)=0        &&\hspace*{-8em} (\text{if}\ a=cf),\\
   &\dz(0)=\dW(0)=\dz(1)=\dW(1)=0        &&\hspace*{-8em} (\text{if}\ a=ss),
\end{align*}
we obtain
\begin{align} \label{VFadj}
   \vdual{W}{\dW} + t^2 \vdual{W'}{\dW'} - \vdual{z'}{\dW'} - \vdual{W'}{\dz'}
   = \vdual{k}{\dW} + \vdual{g}{\dz}.
\end{align}
Let us denote the subspaces of $H^1(I)$ that satisfy the boundary conditions for
$W$ (not for $W'$) and $\dz$ by $H^1_W$ and $H^1_\dz$, respectively.
In all the four cases the second order derivative is a surjective map from $H^1_W$
to the dual of $H^1_\dz$. Therefore, the inf--sup property
\begin{equation} \label{is}
   \sup_{W\in H^1_W\setminus\{0\}} \frac {\vdual{W'}{\dz'}}{(\|W\|^2 + t^2 \|W'\|^2)^{1/2}}
   \ge
   \sup_{W\in H^1_W\setminus\{0\}} \frac {\vdual{W'}{\dz'}}{(\|W\|^2 + \|W'\|^2)^{1/2}}
   \gtrsim
   \bigl(\|\dz\|^2 + \|\dz'\|^2\bigr)^{1/2}
\end{equation}
holds uniformly for $\dz\in H^1_\dz$.
Noting that the conditions for the boundary values of $z$ (not of $z'$) are identical
to those of $\dz$, and respectively those of $W$ and $\dW$,
\eqref{VFadj} is a standard mixed formulation in the space $H^1_W\times H^1_z$
(with $H^1_z:=H^1_\dz$). Inf--sup property \eqref{is} and the
$H^1(I)$-coercivity of the bilinear form $\vdual{W}{\dW} + t^2 \vdual{W'}{\dW'}$
show that \eqref{VFadj} has a unique solution $(z,W)\in H^1_z\times H^1_W$ with bound
\[
   \|W\|^2 + t^2 \|W'\|^2 + \|z\|^2 + \|z'\|^2 \lesssim \|g\|^2 + \|k\|^2.
\]
Here, the hidden constant is independent of $k,g$ and $t\in (0,1]$.

Finally, relations \eqref{adj} give
$\|W''\|=\|g\|$ and $\|z''\|\le\|k\|+\|W\|+\|W''\|\lesssim \|g\|+\|k\|$.
It is also easy to check the remaining boundary conditions for $z$ and $W$.
This concludes the proof in the case of $t\in (0,1]$.

Now we consider $t=0$. Problem \eqref{adj} then reduces to
\[
   z^{(4)} = k''-g \quad\text{in}\ I\qquad (\text{and}\ W:=k-z'')
\]
where the boundary conditions for $z$ are imposed either directly or via conditions on $W=k-z''$.
In all cases, testing the differential equation with $\dz\in H^2(I)$ subject to
\[
   \dz=\dz'=0 \text{ where clamped},\quad
   \dz=0 \text{ where supported},
\]
and without condition on free boundaries, we obtain
\[
   \vdual{z''}{\dz''} = \vdual{k}{\dz''} - \vdual{g}{\dz}.
\]
In all four cases $a\in\{cc, cs, cf, ss\}$, this is a coercive problem
with unique solution $z$ satisfying
\[
   \|z\|_2 \lesssim \|g\| + \|k\|.
\]
The bound for $\|W\|_2$ is obtained through $W''=g$ and $W=k-z''$.
Again, $z$ and $W$ satisfy the required boundary conditions.
\end{proof}

\begin{lemma} \label{la_inj}
Let $a\in\{cc,cs,cf,ss\}$ and $t\in [0,1]$ be given.
If $(z,W)\in V$ satisfies
\[
   b_t((\du,\dM,\dtq),(z,W)) = 0\quad\forall (\du,\dM,\dtq)\in U_a(t)
\]
then $u=W=0$.
\end{lemma}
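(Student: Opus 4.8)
\noindent\emph{Proof sketch.}
The plan is to exploit the block structure $U_a(t)=L_2(I)\times L_2(I)\times\tU_a(t)$ of the trial space: testing the identity $b_t((\du,\dM,\dtq),(z,W))=0$ separately against each of the three factors will recover, piece by piece, exactly the data-free version of the adjoint problem~\eqref{adj}, after which Lemma~\ref{la_adj} closes the argument. (Note that the letter $u$ in the statement is the deflection-test variable, i.e.\ it should read $z$.)

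First I would choose $\dM=0$, $\dtq=0$ and let $\du$ range over $L_2(I)$; since $b_t((\du,0,0),(z,W))=\vdual{\du}{W''}_\cT$, this forces $W''=0$ on every element $T\in\cT$. Next, with $\du=0$, $\dtq=0$ and $\dM$ ranging over $L_2(I)$, the term $\vdual{\dM}{W+z''-t^2W''}_\cT$ must vanish identically, so $W+z''-t^2W''=0$ elementwise, which together with the previous step gives $W+z''=0$ on each $T$. Finally, taking $\du=\dM=0$ and letting $\dtq$ range over $\tU_a(t)$ yields $\<\dtq,(z,W)\>_t=0$ for all $\dtq\in\tU_a(t)$, hence $(z,W)\in\HH_a(t)$ by the equivalence in Lemma~\ref{la_jump}. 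In particular $(z,W)\in H^2(I)\times H^2(I)$, so the two elementwise relations from the first two steps actually hold in $L_2(I)$ on all of $I$.

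Putting this together, $(z,W)\in\HH_a(t)$ satisfies $W''=0$ and $W-t^2W''+z''=0$ in $I$, i.e.\ it solves~\eqref{adj} with $g=k=0$. By the stability estimate in Lemma~\ref{la_adj} we then obtain $\|z\|_2+\|W\|_2\lesssim\|g\|+\|k\|=0$, so $z=W=0$; equivalently, $(0,0)$ is the unique solution furnished by that lemma and $(z,W)$ must coincide with it.

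I do not expect a genuine obstacle: the analytic substance lives in Lemmas~\ref{la_jump} and~\ref{la_adj}, while the present lemma is just a bookkeeping assembly of the three tests. The one point needing a touch of care is the passage from the broken (piecewise) relations $W''=0$ and $W+z''=0$ to true identities on $I$ — this is legitimate only because Lemma~\ref{la_jump} first lifts $(z,W)$ from the product space $V$ into $\HH_a(t)\subset H^2(I)\times H^2(I)$, including its boundary conditions — after which recognising the closed system as the homogeneous instance of~\eqref{adj} is immediate.
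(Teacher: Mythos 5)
Your argument is correct and follows essentially the same route as the paper: test separately with $\du$, $\dM$, and $\dtq$ to obtain $W''=0$, $W-t^2W''+z''=0$, and $(z,W)\in\HH_a(t)$ via Lemma~\ref{la_jump}, then invoke the uniqueness from Lemma~\ref{la_adj} for the homogeneous adjoint problem~\eqref{adj}. Your added care about lifting the broken elementwise identities to identities on all of $I$ (and your note that the statement's ``$u=W=0$'' should read ``$z=W=0$'') are both apt but do not change the substance.
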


\begin{proof}
Lemma~\ref{la_jump} implies that $(z,W)\in\HH_a(t)$. Then, selecting separately $(\du,\dM)=(\du,0)$
and $(\du,\dM)=(0,\dM)$, we obtain $W''=0$ and $W-t^2W''+z''=0$ in $I$. That is,
$(z,W)$ solves \eqref{adj} with $g=k=0$, so that $z=W=0$ by Lemma~\ref{la_adj}.
\end{proof}

\noindent
{\bf Proof of Theorem~\ref{thm_VF}.}
It is enough to prove the well-posedness of \eqref{VF}. The relation of the solution
to \eqref{VF} with problem \eqref{prob} is clear.

The well-posedness of \eqref{VF} is proved in the standard way.
It is clear that the bilinear form $b_t(\cdot,\cdot)$ and functional $L_f$ are bounded in $U(t)$
uniformly with respect to $t$ (the term of the bilinear form comprising the point evaluations
is bounded with the help of Lemma~\ref{la_trace}).
Therefore, it remains to check the two inf--sup conditions for $b_t(\cdot,\cdot)$.
Lemma~\ref{la_inj} implies injectivity,
\[
   \sup_{(\du,\dM,\dtq)\in U_a(t)} b_t((\du,\dM,\dtq),(z,W)) > 0
   \quad\forall (z,W)\in V\setminus\{0\}.
\]
By \cite[Theorem 3.3]{CarstensenDG_16_BSF}, the second inf--sup property,
\begin{equation} \label{infsup}
   \sup_{(z,W)\in V\setminus\{0\}} \frac {b_t((u,M,\tq),(z,W))}{\|(z,W)\|_V}
   \gtrsim
   \|(u,M,\tq)\|_U
   \quad\forall (u,M,\tq)\in U_a(t),
\end{equation}
follows from three results. First, the inf--sup condition
\[
   \sup_{(z,W) \in \HH_a(t)\setminus\{0\}}
   \frac{b_t((u,M,0),(z,W))}{\|(z,W)\|_V}
   \gtrsim \|u\| + \|M\| \quad\forall u,M\in L_2(I)
\]
is needed. It can be proved by an appropriate selection of test functions and using
Lemma~\ref{la_adj}. Second, we need the inf--sup condition
\[
   \sup_{(z,W)\in V\setminus\{0\}}
   \frac{\<\tq,(z,W)\>_t}{\|(z,W)\|_V}
   \gtrsim
   \|\tq\|_h\quad\forall \tq\in \tU_a(t),
\]
which is true by Lemma~\ref{la_trace}. Finally, the relation
\[
   \HH_a(t) = \{(z,W) \in V;\; \<\tq,(z,W)\>_t=0\  \forall \tq\in \tU_a(t)\}
\]
is needed. It holds by Lemma~\ref{la_jump}.
This finishes the proof of Theorem~\ref{thm_VF}.
\qed

\section{Numerical results} \label{sec_num}
 
In this section we report on numerical experiments with our DPG scheme.
Throughout we consider problem \eqref{prob} with $f(x)=\sin(\pi x)$, the
clamped-free boundary condition ``$cf$'', cf.~\eqref{BC},
and thickness parameter $t\in\{1,10^{-3},10^{-6},0\}$.

We consider uniform meshes $\cT$.
The approximation space $U_{cf,h}(t)$ uses piecewise polynomials on $\cT$ of degree $p\in\{0,1,2\}$
both for $u$ and $M$. The trial-to-test operator $\ttt$ is approximated by replacing
$V$ with piecewise polynomial spaces on $\cT$ of degree $p+3$ for both components.

Figures~\ref{fig_p0}--\ref{fig_p2} present the results for $p=0,1,2$, respectively.
All the graphs give the errors versus the number of degrees of freedom, along
with a curve $O(h^{p+1})$ scaled to fit the plotted range ($h$ is the mesh size).
The legends are identical in all graphs, except for the curve indicating the convergence order.
Specifically,
``residual'' indicates the (approximated) error of the residual in the $V$-norm:
$\|L-b_t((u_h,M_h,\tq_h),\ttt(\cdot))\|_{V'}$,
``u'' and ``M'' refer to the $L_2$-errors $\|u-u_h\|$ and $\|M-M_h\|$, respectively,
``proj(u)'' and ``proj(M)'' indicate the respective $L_2$-errors of the best approximation,
and ``tr(u)'' resp. ``tr(M)'' refer to the parts of the trace error
$\|\tq-\tq_h\|_h$ that stem from $u$ resp. $M$.

Since $u$ and $M$ are smooth functions and the method is quasi-optimal in the $L_2$-norm
by Theorem~\ref{thm_DPG}, we expect a convergence of order $O(h^{p+1})$.
This rate is confirmed in all the cases, whereas the trace errors converge faster than predicted
(except for the trace of $u$ when $p=0$).
We also note stability issues for large dimensions. This is expected by the large
condition number of the stiffness matrix that behaves like $O(h^{-4})$.
Before reaching large dimensions the results are practically independent of $t$ when
$t=10^{-3},10^{-6},0$. Thus, our scheme is robust with respect to $t$, it is locking free.
We also observe that the errors for $u$ and $M$ are
indistinguishable from their best-approximation errors, until round-off errors appear.
Finally we note that numerical experiments (not reported) show very similar behavior
of the method for the other boundary conditions.

\begin{figure}[htb]
\includegraphics[width=0.5\textwidth]{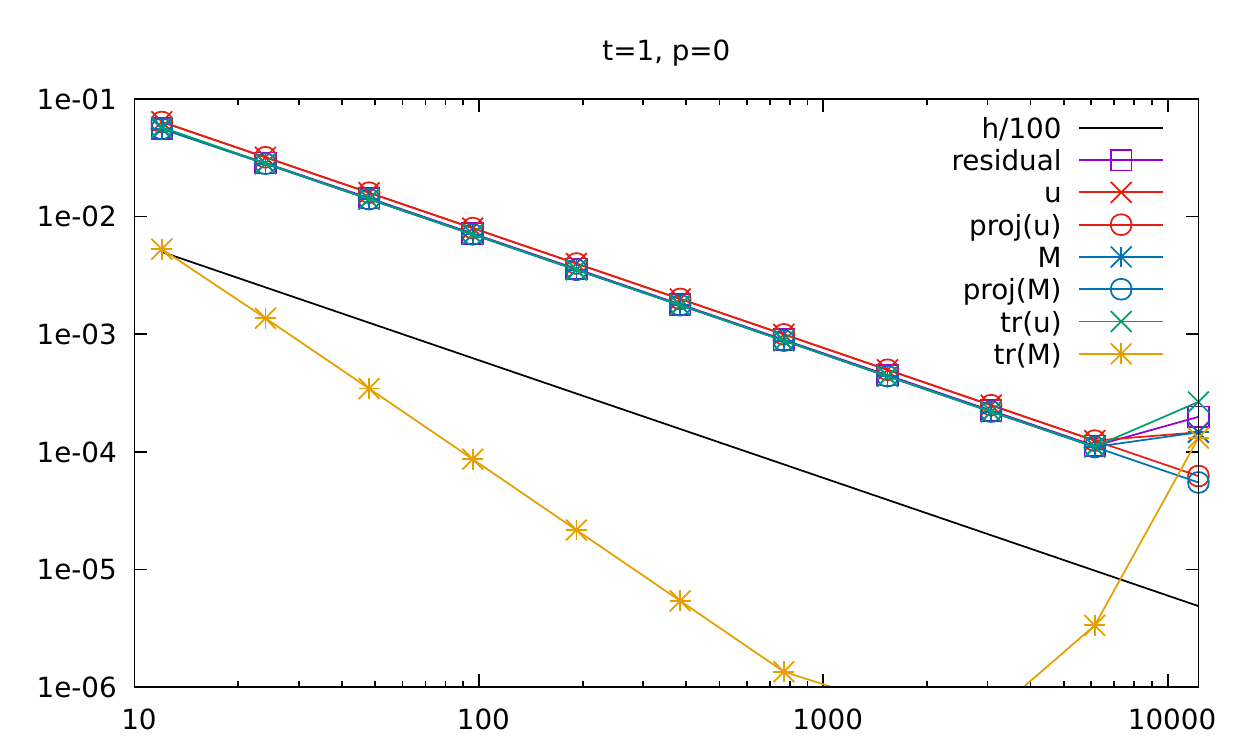}\hspace*{-0.5em}
\includegraphics[width=0.5\textwidth]{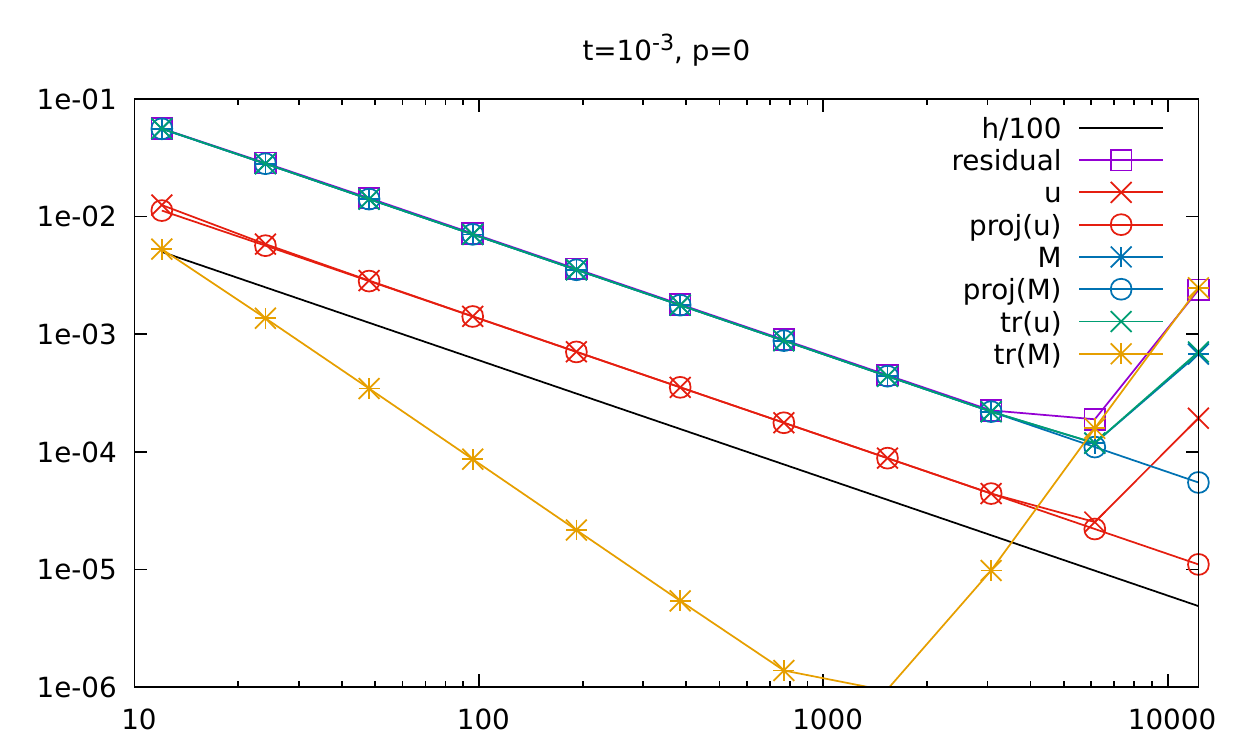}\\
\includegraphics[width=0.5\textwidth]{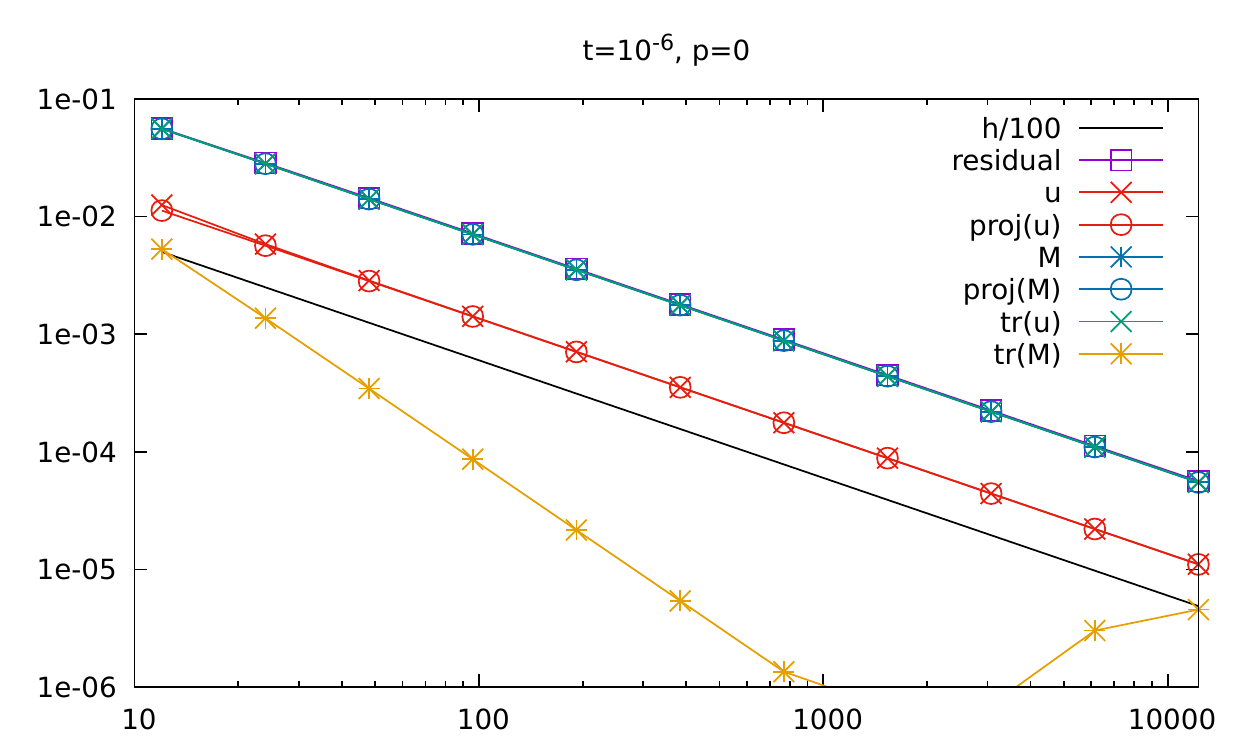}\hspace*{-0.5em}
\includegraphics[width=0.5\textwidth]{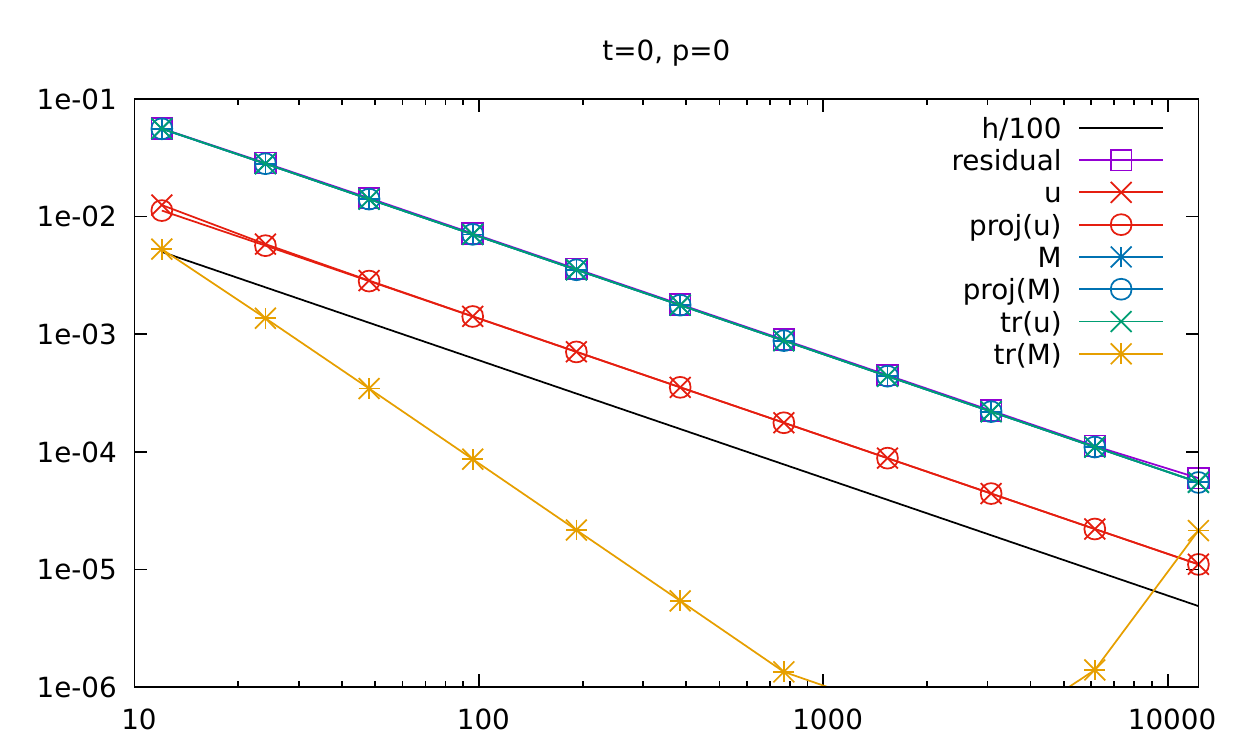}
\caption{Errors in the case of piecewise constant approximations, $t=1,10^{-3},10^{-6},0$.}
\label{fig_p0}
\end{figure}

\begin{figure}[htb]
\includegraphics[width=0.5\textwidth]{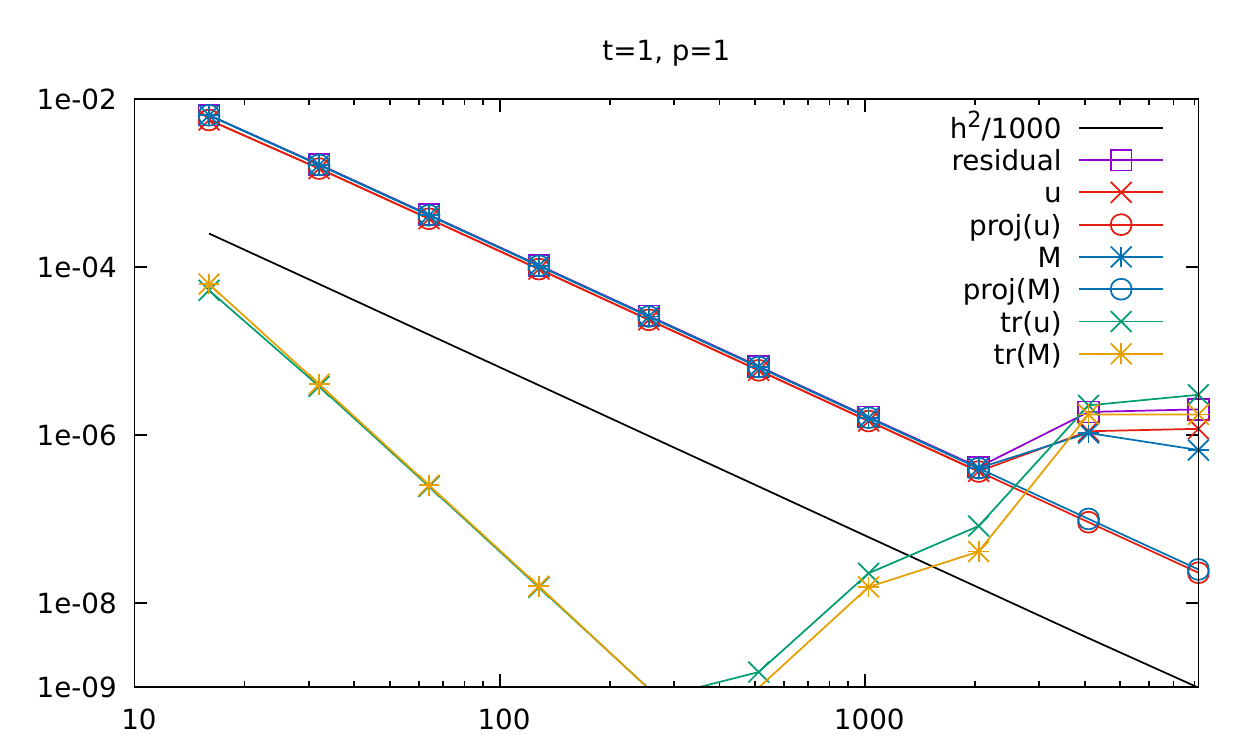}\hspace*{-0.5em}
\includegraphics[width=0.5\textwidth]{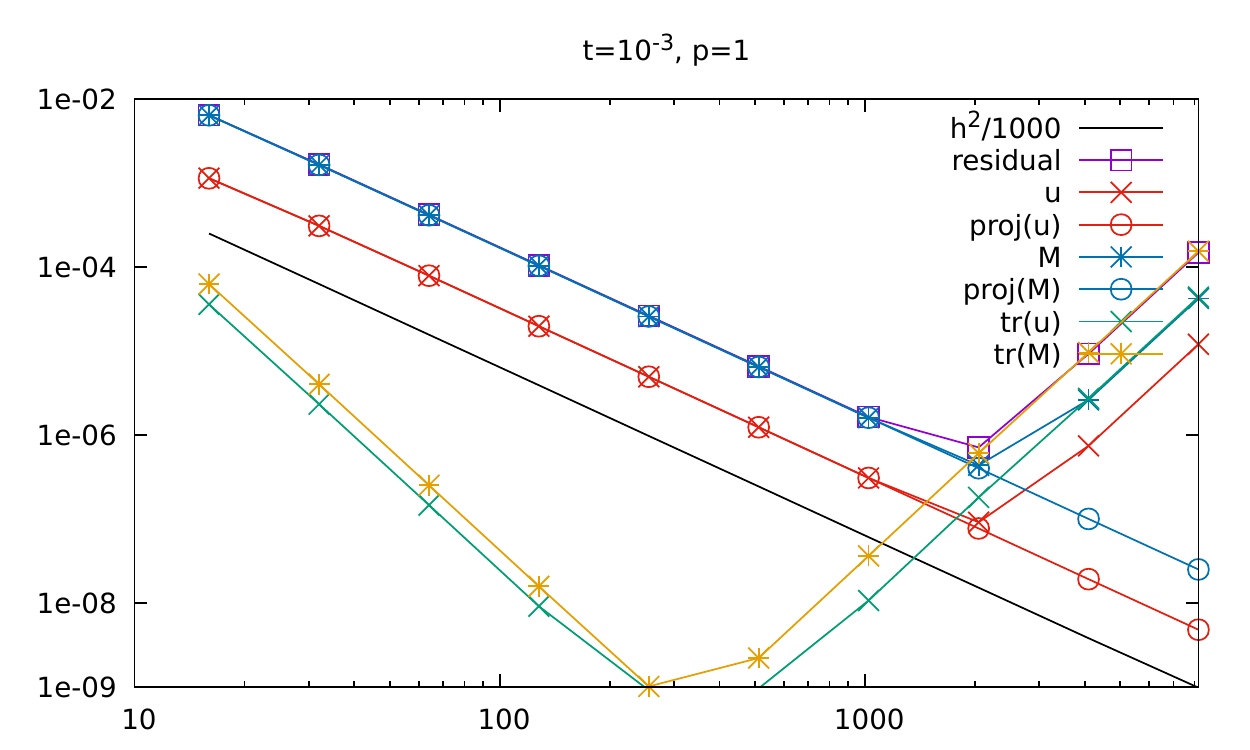}\\
\includegraphics[width=0.5\textwidth]{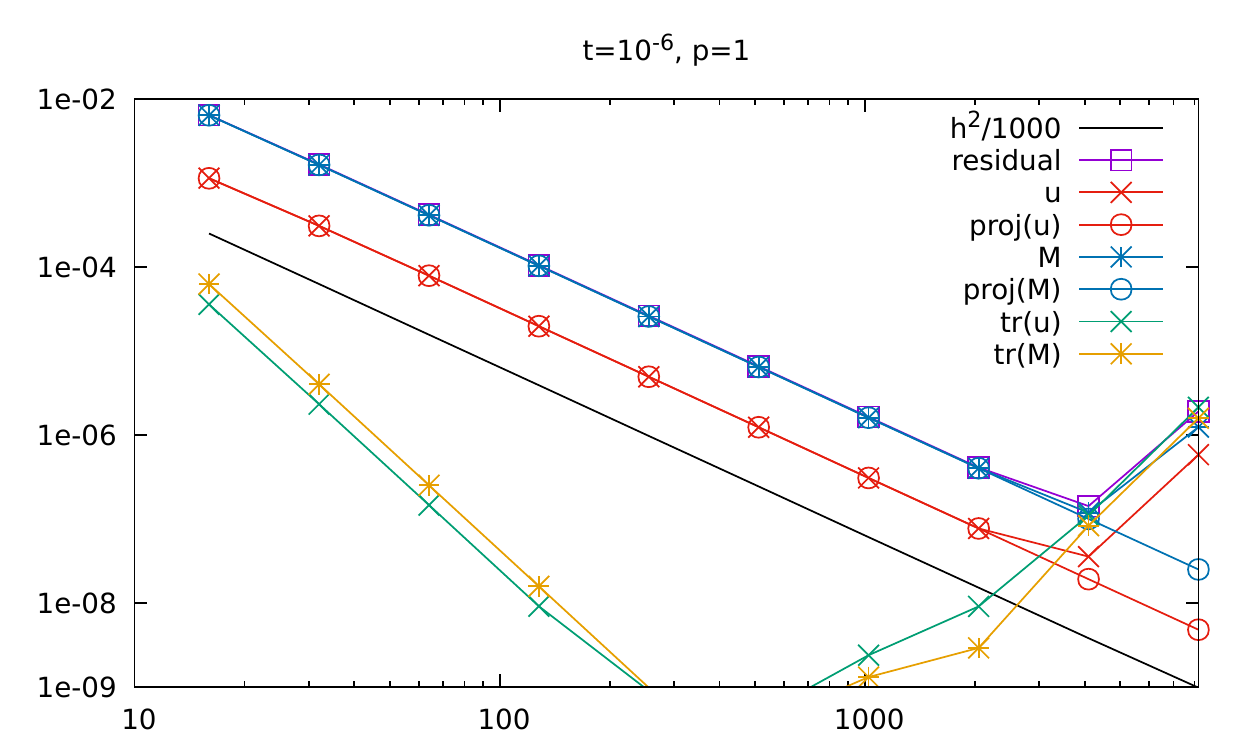}\hspace*{-0.5em}
\includegraphics[width=0.5\textwidth]{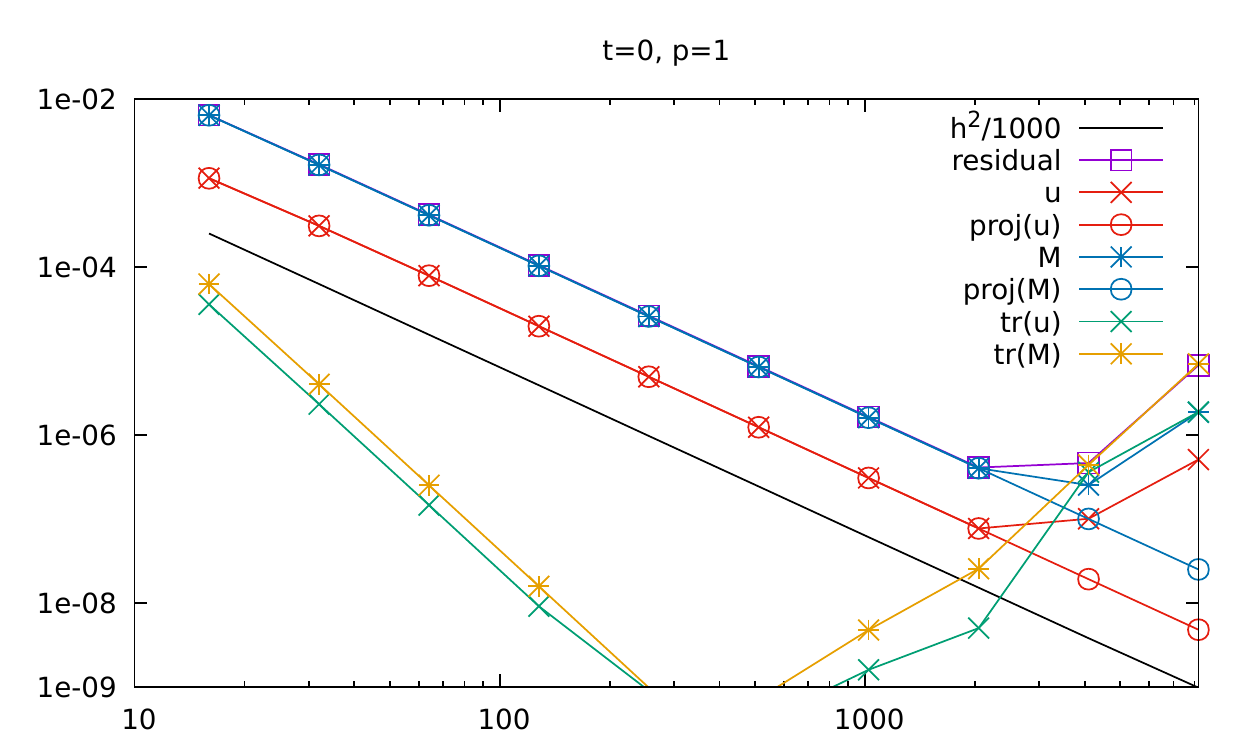}
\caption{Errors in the case of piecewise linear approximations, $t=1,10^{-3},10^{-6},0$.}
\label{fig_p1}
\end{figure}

\begin{figure}[htb]
\includegraphics[width=0.5\textwidth]{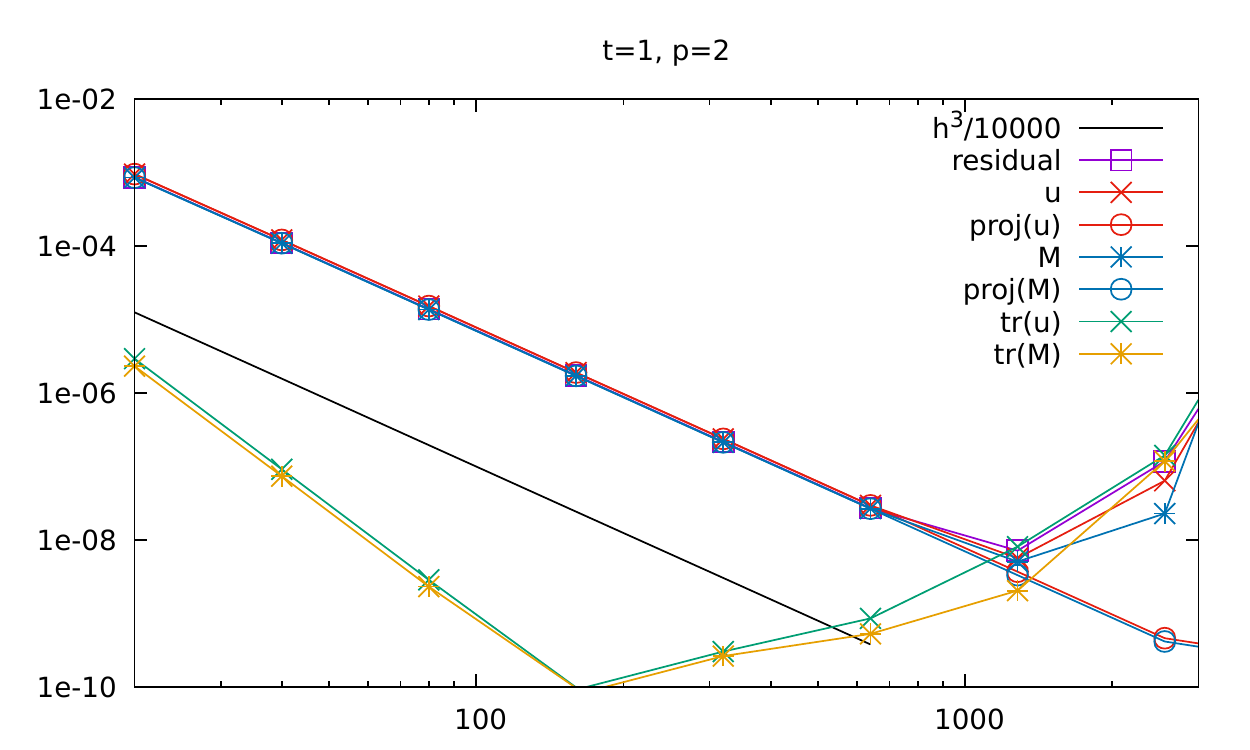}\hspace*{-0.5em}
\includegraphics[width=0.5\textwidth]{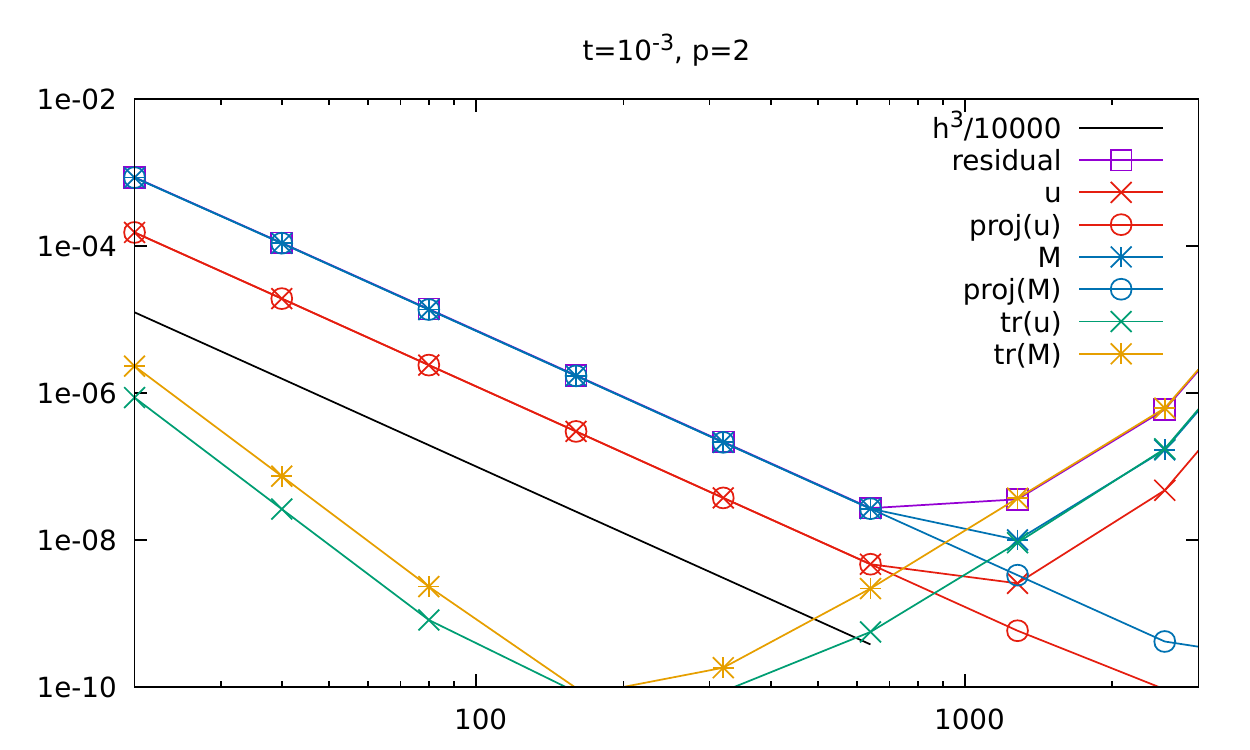}\\
\includegraphics[width=0.5\textwidth]{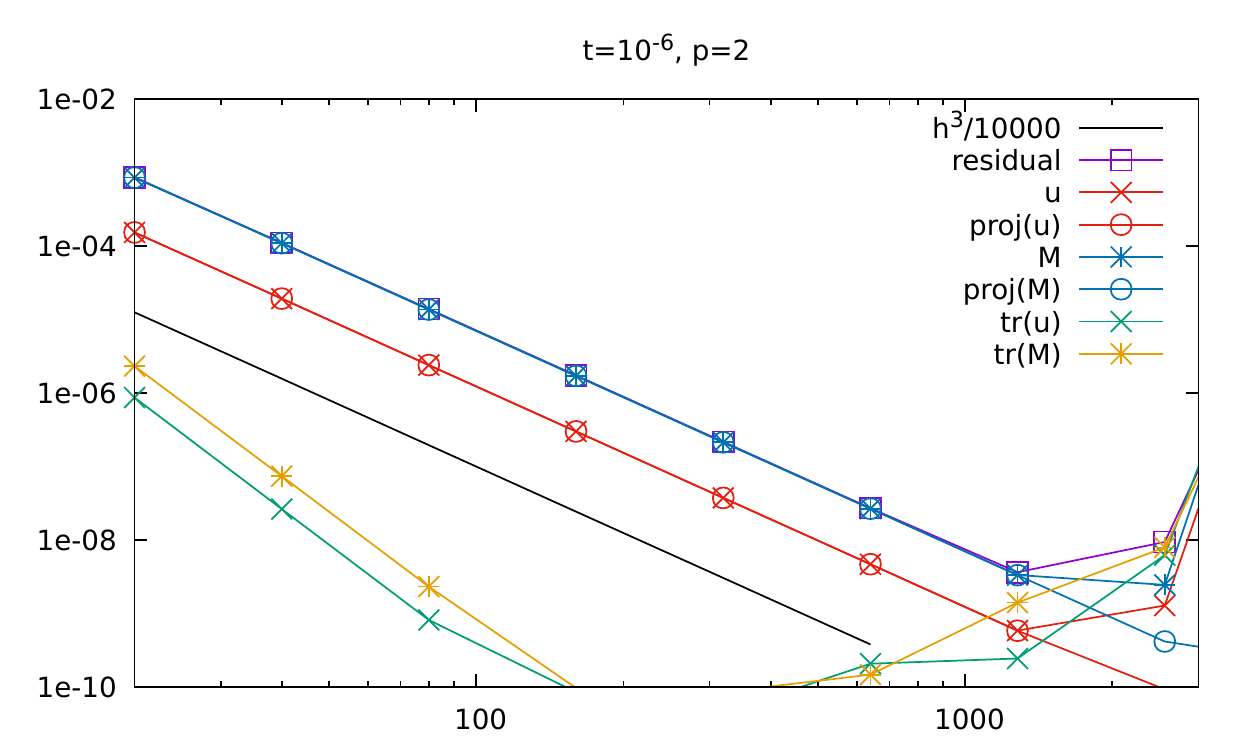}\hspace*{-0.5em}
\includegraphics[width=0.5\textwidth]{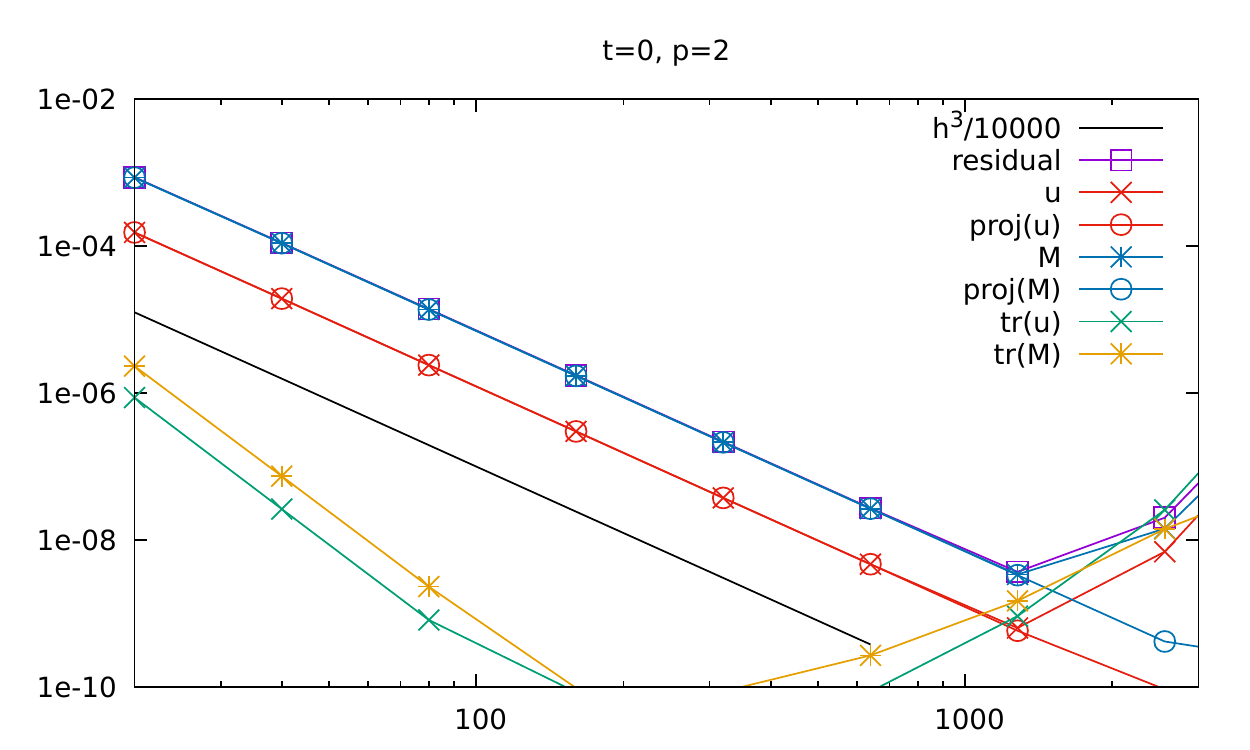}
\caption{Errors in the case of piecewise quadratic approximations, $t=1,10^{-3},10^{-6},0$.}
\label{fig_p2}
\end{figure}

\bibliographystyle{siam}
\bibliography{/home/norbert/tex/bib/heuer,/home/norbert/tex/bib/bib}

\end{document}